\newtheorem{theorem}{Theorem}
\newtheorem{lemma}[theorem]{Lemma}
\newtheorem{proposition}[theorem]{Proposition}
\theoremstyle{definition}
\theoremstyle{remark}
\numberwithin{equation}{section}
\newcommand{\set}[1]{\left\{#1\right\}}
\newcommand{\abs}[1]{\lvert#1\rvert}
\newcommand{\nm}[1]{\lVert#1\rVert}
\newcommand{\bnm}[1]{\big\Vert#1\big\Vert}
\newcommand{\D}{\mathbb{D}}
\newcommand{\N}{\mathbb{N}}
\newcommand{\C}{\mathbb{C}}
\renewcommand{\phi}{\varphi}
\DeclareMathOperator{\Area}{Area}
\renewcommand{\H}{\mathcal{H}}
\newcommand{\B}{\mathcal{B}}
\renewcommand{\tilde}{\widetilde}
\begin{document}

\title{Inner functions in weak Besov spaces}

\author{Janne Gr\"ohn}
\address{Department of Physics and Mathematics, University of Eastern Finland, P.O. Box 111, FI-80101 Joensuu, Finland}
\email[Corresponding author]{janne.grohn@uef.fi}

\author{Artur Nicolau}
\address{Departament de Matem\`atiques, Universitat Aut\`onoma de Barcelona, 08193, Bellaterra, Barcelona, Spain}
\email{artur@mat.uab.cat}

\thanks{The first author was supported by the Academy of Finland \#258125, and the second
author was supported in part by the projects MTM2011-24606 and 2009SGR420.}

\subjclass[2010]{Primary 30J05, 30J10; Secondary 30H10}

\keywords{Blaschke product, Hardy space, inner function, weak Lebesgue space}

\date{\today}


\begin{abstract}
It is shown that inner functions
in weak Besov spaces are precisely the exponential Blaschke products.
\end{abstract}

\maketitle



\section{Introduction}

Let $\H(\D)$ denote the algebra of holomorphic functions in the unit disc $\D$ of the complex plane $\C$.
Recall that $I\in\H(\D)$ is called inner provided that $I$ is  bounded in $\D$, and it satisfies
$\abs{I(e^{i\theta})} = 1$ for almost every (a.e.) $e^{i\theta}\in\partial\D$, where $I(e^{i\theta})=\lim_{r\to 1^-} I(re^{i\theta})$.
Every inner function $I$ can
be represented as a product $I = \xi B S$, where $\abs{\xi}=1$, $B$ is a Blaschke product,
and $S$ is a singular inner function; see \cite[p.~75]{G1981}.
It is a classical problem to classify those inner functions whose derivative belongs to a pre-given function space.

There is an extensive literature on inner functions whose derivative belongs to certain Hardy or Bergman space.
In the seventies Ahern, Clark and other authors proved many seminal results \cite{A1979,AC1974,AC1976,CS1969,C1971,P1973},
which have been extended and complemented later in various directions, see  for instance 
\cite{A1983,AJ1984,AV2010,D1994,D1998,D2006,FM2008,GGJ2011,GPV2007,J2009,K1984,M2013,PGR2009,P2008,P2004,P2011,V1982}.
The following presentation discusses some of those results. 
The first one is \cite[Theorem~1.1]{K1984}, see also \cite[Theorem~3.1]{DGV2002},
stating that the only inner functions in any Besov space
\begin{equation*}
  \B^p = \set{ f\in\H(\D) : \nm{f}_{B^p} = \left( \int_\D \abs{f'(z)}^p \, d\mu_p(z) \right)^{1/p} < \infty},
  \quad 1<p<\infty,
\end{equation*} 
are the finite Blaschke products. Here $d\mu_p(z) = (1-\abs{z})^{p-2} \, dA(z)$, and 
$dA(z)$ is the element of Lebesgue area measure in $\D$.

A Blaschke product $B$
is said to be exponential, if there exists a constant $M=M(B)$ with $0\leq M<\infty$ such that each annulus 
\begin{equation} \label{eq:annuli}
  \mathcal{A}_j = \set{z\in\D : 2^{-j} < 1 - \abs{z} \leq 2^{-j+1}}, \quad j\in\N,
\end{equation}
contains at most $M$ zeros of $B$. Exponential Blaschke products can be described
in terms of the growth of the integral means of their fractional derivatives \cite[Theorem~3.1]{AJ1984}, 
see also \cite{J2009, V1982}. Moreover, by \cite[Theorem~3]{AC1974} and \cite[Theorem~1]{CNpreprint}
exponential Blaschke products are the only inner functions whose derivative belongs to the weak 
Hardy space $H^1_w$, which consists of those functions $f\in\H(\D)$ for which
there exists a constant $C=C(f)$ with $0<C<\infty$ such that
\begin{equation*} 
  \big|\big\{ e^{i\theta} \in\partial\D : \abs{f(re^{i\theta})} > \lambda \big\}\big|  
  \leq C/\lambda, \quad 0<\lambda<\infty, \quad 0\leq r < 1.
\end{equation*}
Here $\abs{E}$ denotes the Euclidean length of the one-dimensional set $E$.
The present paper extends \cite[Theorem~1]{CNpreprint} by showing
that the inner functions in weak Besov spaces are precisely the exponential Blaschke products.


\subsection{Notation}\label{sec:notation}

We briefly recall the familiar concepts of $L^p$ spaces of arbitrary measure spaces, and then proceed to 
consider certain specific spaces of analytic functions in $\D$. Let $X$ be a measure space, and let $\mu$ 
be a positive measure on $X$. For $0<p<\infty$, $L^p(X,\mu)$ denotes the space of all
complex-valued $\mu$-measurable functions on $X$ whose modulus to the $p^\text{th}$ power is integrable.
The shorter notation $L^p(X)$ is reserved for the space $L^p(X,\mu)$ provided that $\mu$ is the Lebesgue measure
restricted to $X$. Correspondingly, $L^p_w(X,\mu)$ for $0<p<\infty$ is the weak $L^p(X,\mu)$ space, which contains 
those complex-valued $\mu$-measurable functions $f$ for which the quasi-norm
\begin{equation*}
  \nm{f}_{L^p_w(X,\mu)} 
  = \sup_{0<\lambda<\infty} \, \lambda  \, \mu\big( \big\{x\in X : \abs{f(x)}>\lambda\big\} \big)^{1/p}\\
\end{equation*}
is finite. Again, we write $L^p_w(X)$ instead of $L^p_w(X,\mu)$ in the case that $\mu$ is the Lebesgue measure
restricted to $X$. For $0<p<\infty$, we have $L^p(X,\mu)\subsetneq L^p_w(X,\mu)$.
If $0<r<p<\infty$, then the Kolmogorov condition \cite[p.~485, Lemma~2.8]{GGF1985} states that
$f\in L^p_w(X,\mu)$ if and only if there exists a constant $C=C(p,r,f)$ with $0<C<\infty$ such that
\begin{equation*}
\left( \int_E \abs{f}^r \, d\mu \right)^{\frac{1}{r}} \leq C \, \mu(E)^{\frac{1}{r}-\frac{1}{p}}
\end{equation*}
for all $\mu$-measurable subsets $E$ of $X$.
For more information on $L^p$ spaces of arbitrary measure spaces, we refer to \cite{GGF1985,G2008}.

Instead of working with arbitrary measure spaces, we concentrate on holomorphic functions in $\D$. 
As usual, the Hardy space $H^p$ for $0<p<\infty$ is defined as
\begin{equation*}
  H^p = \set{ f \in\H(\D) : \nm{f}_{H^p} 
     = \sup_{0\leq r < 1} \left( \frac{1}{2\pi} \int_0^{2\pi} \abs{f(re^{i\theta})}^p \, d\theta \right)^{1/p}< \infty}.
\end{equation*}
The non-tangential maximal function of $f$ is given by
\begin{equation*} 
  M_\sphericalangle f(e^{i\theta})
  = \sup_{z\in \Gamma_\alpha(e^{i\theta})} \abs{f(z)}, \quad e^{i\theta}\in\partial\D,
\end{equation*}
for fixed $1<\alpha<\infty$, where the involved non-tangential region (Stolz angle)
\begin{equation} \label{eq:stolz}
  \Gamma_\alpha(e^{i\theta}) = \set{ z\in\D :  \abs{z-e^{i\theta}} \leq \alpha(1-\abs{z})}, \quad e^{i\theta}\in\partial\D,
\end{equation}
has a vertex at $e^{i\theta}\in\partial\D$ with aperture equal to $2 \arctan\sqrt{\alpha^2 -1}$. 
A fundamental result by Hardy and Littlewood (for $1< p < \infty$) and by Burkholder, 
Gundy and Silverstein (for $0 < p \leq 1$)  states that Hardy spaces can be characterized by means 
of the non-tangential maximal function. That is, if $f\in\H(\D)$ and $0<p<\infty$, then
$f\in H^p$ if and only if $M_\sphericalangle f\in L^p(\partial\D)$. See \cite[p.~57, Theorem~3.1]{G1981}.

For $0<p<\infty$, the weak Hardy space $H^p_w$ is defined as
\begin{equation*}
  H^p_w = \set{f\in\H(\D) : 
    \nm{f}_{H^p_w} = \sup_{0\leq r < 1} \, \nm{f_r}_{L^p_w(\partial\D)} < \infty}, \quad f_r(z) = f(rz).
\end{equation*}
The weak Hardy spaces can be also characterized in terms of the non-tangential maximal function; 
if $f\in\H(\D)$ and $0<p<\infty$, then $f\in H^p_w$ if and only if $M_\sphericalangle f\in L^p_w(\partial\D)$.
We refer to \cite[p.~36]{CMR2006} for further discussion.


\subsection{Weak Bergman spaces}

Define
\begin{equation*}
  \mathcal{L}^p_w = \H(\D) \cap  L^p_w(\D,\mu_p), \quad 1<p<\infty.
\end{equation*}
Since $\mu_1$ is not finite in $\D$, we employ a different definition to correspond the case $p=1$. 
We write $f\in\tilde{\mathcal{L}}^1_w$ provided that $f\in\H(\D)$ and $(1-\abs{z})^{-1} \abs{f(z)} \in L^1_w(\D,A)$.

The nesting property of the spaces $\mathcal{L}^p_w$, which is given by Proposition~\ref{prop:inclusion}(ii), 
relies on a certain point-wise growth estimate. If $0\leq p<\infty$, then we define the growth spaces 
$A^{-p}$, and $A^{-p}_0$, to consist of functions $f\in\H(\D)$ satisfying
\begin{equation*}
  \nm{f}_{A^{-p}} = \sup_{z\in\D} \, (1-\abs{z})^p \abs{f(z)} < \infty, \quad \text{and} \quad \lim_{\abs{z}\to 1^-} \, (1-\abs{z})^p \abs{f(z)} =0,
\end{equation*}
respectively; see \cite{K1975}.


\begin{proposition} \label{prop:inclusion}
We have
\begin{enumerate}
\item[(i)] 
  $H^1 \subset \tilde{\mathcal{L}}^1_w \subset H^1_w \subset \mathcal{L}^p_w \subset A^{-1}$ for all $1<p<\infty$;

\item[(ii)] 
  $\mathcal{L}^p_w \subset \mathcal{L}^q_w$ for all $1<p<q<\infty$.
\end{enumerate}
\end{proposition}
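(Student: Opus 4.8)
The plan is to prove the various inclusions in turn, moving from the strongest statement outward. For part (i), the inclusion $H^1 \subset \tilde{\mathcal{L}}^1_w$ should follow from the fact that $f \in H^1$ implies $f' \in L^1(\D, (1-|z|)\,dA)$ by a standard area-integral or Hardy--Stegenga type estimate, together with the trivial containment $L^1(X,\mu) \subset L^1_w(X,\mu)$; one must be slightly careful that the statement involves $(1-|z|)^{-1}|f(z)|$ rather than $f'$, so more precisely one uses that $f \in H^1$ forces $(1-|z|)^{-1}|f(z)| \in L^1(\D, A)$ via $|f(z)| \lesssim (1-|z|)^{-1}\|f\|_{H^1}$ near the boundary combined with the finer integrability coming from the fact that the Poisson extension of an $L^1$ boundary function has controlled growth. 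For the inclusion $\tilde{\mathcal{L}}^1_w \subset H^1_w$, I would pass through the non-tangential maximal function: given a point $e^{i\theta}\in\partial\D$ and $z \in \Gamma_\alpha(e^{i\theta})$, the Cauchy or Poisson representation lets one bound $|f(z)|$ by an average of $(1-|w|)^{-1}|f(w)|$ over a Carleson-type region near $e^{i\theta}$, and then a Kolmogorov-condition argument (using the characterization quoted in the excerpt with some $0<r<1$) transfers the weak-$L^1$ bound on $(1-|z|)^{-1}|f(z)|$ to a weak-$L^1$ bound on $M_\sphericalangle f$, hence $f \in H^1_w$ by the quoted maximal-function characterization of $H^1_w$.

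The inclusion $H^1_w \subset \mathcal{L}^p_w$ for $1<p<\infty$ is the heart of part (i) and is where I expect the main difficulty. Here one is given that $M_\sphericalangle f \in L^1_w(\partial\D)$ and must deduce that $|f'|^p (1-|z|)^{p-2}$ lies in $L^1_w(\D)$, i.e.\ that $|f'(z)|(1-|z|)^{1-2/p} \in L^p_w(\D,\mu_p)$ in the appropriate normalization. The natural route is again the Kolmogorov condition: it suffices to show that for every measurable $E\subset\D$,
\begin{equation*}
  \int_E |f'(z)|^r (1-|z|)^{r(p-2)/p}\, dA(z) \lesssim \mu_p(E)^{1 - r/p}
\end{equation*}
for some fixed $r$ with $0<r<p$. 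One would cover $E$ by Carleson squares or by Whitney boxes, use the pointwise estimate $|f'(z)| \lesssim (1-|z|)^{-1} M_\sphericalangle f(e^{i\theta})$ for $e^{i\theta}$ in the boundary arc below the Whitney box containing $z$ (this is the standard derivative-versus-maximal-function inequality, valid because $f'$ on a Whitney box is controlled by the sup of $|f|$ on a slightly larger box via Cauchy's estimate), and then reduce the area integral to a boundary integral of $(M_\sphericalangle f)^r$ against the pushed-forward measure, at which point the weak-$L^1$ bound on $M_\sphericalangle f$ (again via Kolmogorov on $\partial\D$) closes the estimate. The bookkeeping of exponents — choosing $r$ so that both the area-to-boundary reduction and the final application of Kolmogorov's condition are simultaneously valid — is the delicate point. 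The last inclusion $\mathcal{L}^p_w \subset A^{-1}$ should be comparatively soft: subharmonicity of $|f'|^p$ gives $|f'(z)|^p \lesssim (1-|z|)^{-2}\int_{D(z,(1-|z|)/2)} |f'|^p\, dA$, and bounding the right-hand side by $\|f\|_{L^p_w(\D,\mu_p)}$ times a power of the area (Kolmogorov again, or directly) yields $|f'(z)| \lesssim (1-|z|)^{-2}$, which integrates radially to $|f(z)| \lesssim (1-|z|)^{-1}$, i.e.\ $f \in A^{-1}$ after absorbing the value at the origin.

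For part (ii), the monotonicity $\mathcal{L}^p_w \subset \mathcal{L}^q_w$ for $1<p<q<\infty$, I would exploit the pointwise growth estimate $\mathcal{L}^p_w \subset A^{-1}$ from part (i): writing $d\mu_q = (1-|z|)^{q-p}\,d\mu_p$ and using that an $\mathcal{L}^p_w$ function $f$ satisfies $(1-|z|)^{q-p}|f'(z)|^{q-p} \lesssim \|f\|_{A^{-1}}^{\,q-p}\cdot 1$ is too crude because $f'$ need not be in $A^{-1}$; instead one should note that $f\in\mathcal{L}^p_w$ implies, by the subharmonicity argument above, the pointwise bound $|f'(z)| \lesssim (1-|z|)^{-1-2/p}\|f\|$ on annuli, no — cleaner is to compare level sets directly: for $f\in\mathcal{L}^p_w$ the distribution function of $|f'(z)|^p(1-|z|)^{p-2}$ with respect to $dA$ is $\Oh(1/\lambda)$, and one wants the same for $|f'(z)|^q(1-|z|)^{q-2}$. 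Since $|f'(z)|(1-|z|)^{1-2/p}\le C$ pointwise would give $|f'(z)|^q(1-|z|)^{q-2} = \big(|f'(z)|(1-|z|)^{1-2/p}\big)^{q-p}\cdot |f'(z)|^p(1-|z|)^{p-2} \le C^{q-p}|f'(z)|^p(1-|z|)^{p-2}$, the inclusion follows once one establishes that pointwise bound — which is exactly the $A^{-1}$-type estimate applied to the function $g = f'$ against the measure $\mu_p$, obtained via subharmonicity of $|f'|^p$ and the weak-$L^p$ Kolmogorov inequality over the Whitney disc around $z$. Thus (ii) reduces to the same local subharmonicity + Kolmogorov mechanism used for the last inclusion of (i), and no genuinely new idea is needed beyond what part (i) already requires.
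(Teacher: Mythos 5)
There is a genuine gap, and it is pervasive: you have misread the definitions of the spaces involved. The paper defines $\mathcal{L}^p_w = \H(\D)\cap L^p_w(\D,\mu_p)$ and $\tilde{\mathcal{L}}^1_w$ by the condition $(1-\abs{z})^{-1}\abs{f(z)}\in L^1_w(\D,A)$; these are conditions on $f$ itself, not on $f'$ (the derivative condition defines the separate space $\B^p_w$). Your treatment of the central inclusion $H^1_w\subset\mathcal{L}^p_w$ sets out to prove that $f\in H^1_w$ implies $f'\in L^p_w(\D,\mu_p)$, which is \emph{false}: any singular inner function, or any infinite Blaschke product that is not exponential, lies in $H^\infty\subset H^1_w$, yet its derivative is not in $L^p_w(\D,\mu_p)$ --- that is precisely the content of Theorem~1 and Propositions~3--4 of the paper, and if your version of the inclusion were true the main theorem would collapse. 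The same $f$-versus-$f'$ confusion infects your arguments for $\mathcal{L}^p_w\subset A^{-1}$ and for part (ii). Separately, your route into $\tilde{\mathcal{L}}^1_w$ is broken even on its own terms: you claim $f\in H^1$ forces $(1-\abs{z})^{-1}\abs{f(z)}\in L^1(\D,A)$ in the strong sense, but $\int_\D (1-\abs{z})^{-1}\,dA(z)=\infty$, so this fails already for $f\equiv 1$. Only the \emph{weak} $L^1$ membership holds, and establishing it is the actual work: one must estimate $\Area\{z:\abs{f(z)}>\lambda(1-\abs{z})\}$ directly, which the paper does by slicing into circles and a summation-by-parts/layer-cake argument against the distribution function of $M_\sphericalangle f$.

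Some of your machinery does survive once redirected at $f$ rather than $f'$. The last inclusion $\mathcal{L}^p_w\subset A^{-1}$ is indeed subharmonicity of $\abs{f}$ plus the Kolmogorov condition with $r=1$ on the Whitney disc $D(a,(1-\abs{a})/2)$, exactly as you propose (though the intermediate bound should come out as $\abs{f(a)}\lesssim(1-\abs{a})^{-1}$ directly, with no radial integration needed). Part (ii) is also morally your argument: the correct factorization is $\abs{f}^q(1-\abs{z})^{q-2}=\bigl(\abs{f}(1-\abs{z})\bigr)^{q-p}\abs{f}^p(1-\abs{z})^{p-2}$, so the pointwise input is the $A^{-1}$ bound $\abs{f(z)}(1-\abs{z})\leq C$, not your exponent $1-2/p$ (your displayed identity does not balance: the $(1-\abs{z})$-exponents give $q-2q/p$ rather than $q-2$). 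Finally, for $H^1_w\subset\mathcal{L}^p_w$ correctly read, the paper's argument is a covering one you did not anticipate: decompose $\{M_\sphericalangle f>\lambda\}$ into disjoint arcs $I_k$ with $\sum\abs{I_k}\leq\nm{M_\sphericalangle f}_{L^1_w}/\lambda$, cover $\{z:\abs{f(z)}>\lambda\}$ by sectorial boxes $T(I_k)$ with $\mu_p(T(I_k))\lesssim\abs{I_k}^p$, and use $\sum\abs{I_k}^p\leq(\sum\abs{I_k})^p$; no Kolmogorov condition or derivative estimate enters at all.
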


It is worth mentioning that $\mathcal{L}^p_w$ is strictly larger than $H^1_w$ for any $1<p<\infty$. For instance,
since $L^p(\D,\mu_p)$ contains the Bloch space, $\mathcal{L}^p_w$ contains functions in $\H(\D)$ having
finite radial limit at no point of the unit circle for any $1<p<\infty$. In regard to Proposition~\ref{prop:inclusion}(i), we 
point out that $H^1\subset A^{-1}_0$ \cite[Theorem~5.9]{D1970}.


\subsection{Derivatives of inner functions}

The weak Besov spaces $\B^p_w$ are defined~as
\begin{equation*}
  \B^p_w = \big\{ f\in\H(\D) : f' \in L^p_w(\D,\mu_p) \big\}, \quad 1<p<\infty.
\end{equation*}
Our main result characterizes inner functions in weak Besov spaces.
This should be compared with the result mentioned in the introduction, 
according to which the only inner functions in the corresponding Besov spaces are the finite Blaschke products.


\begin{theorem} \label{thm:weakBergman}
Let $B$ be an inner function, and let $1<p<\infty$. Then,
$B\in\B^p_w$ if and only if $B$ is an exponential Blaschke product.
\end{theorem}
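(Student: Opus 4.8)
The plan is to prove the two implications separately, and the backbone on both sides will be the Kolmogorov equivalence recalled in Section~\ref{sec:notation}: for $B\in\B^p_w$ with $1<p<\infty$, pick any $r$ with $0<r<p$, and the condition $B'\in L^p_w(\D,\mu_p)$ is equivalent to the existence of a constant $C$ with $\big(\int_E \abs{B'}^r\,d\mu_p\big)^{1/r}\leq C\,\mu_p(E)^{1/r-1/p}$ for every $\mu_p$-measurable $E\subset\D$. The natural test sets $E$ will be the Carleson-type boxes (and their top halves) over arcs of $\partial\D$.

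\textbf{Exponential Blaschke product $\Rightarrow$ $B\in\B^p_w$.} Suppose each annulus $\A_j$ contains at most $M$ zeros of $B$. Since $\abs{B(z)}\le 1$, the Schwarz--Pick inequality gives $\abs{B'(z)}\le (1-\abs{z}^2)^{-1}(1-\abs{B(z)}^2)\le 2(1-\abs{z})^{-1}$, so already $B\in A^{-1}$; but a cruder bound is wasteful near the zeros. The cleaner route is to write $\abs{B'(z)}\le \sum_n \abs{b_n'(z)}\prod_{m\ne n}\abs{b_m(z)}\le \sum_n \abs{b_n'(z)}$ where $b_n$ are the Blaschke factors, and note $\abs{b_n'(z)}=\tfrac{1-\abs{a_n}^2}{\abs{1-\overline{a_n}z}^2}\asymp \tfrac{1-\abs{a_n}}{\abs{1-\overline{a_n}z}^2}$. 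For a Carleson box $Q=Q(I)$ over an arc $I$ with $\abs{I}=2^{-k}$, the exponential (finite-density-per-annulus) hypothesis lets one count: the number of zeros $a_n$ with $a_n$ in the Carleson region over a subarc of length $2^{-j}$, $j\ge k$, is $O((j-k)+1)$ (actually $O(2^{j-k})$ boxes each with $\le M$ zeros); then $\int_Q \abs{b_n'}^r\,d\mu_p$ for a single factor with $a_n$ at level $2^{-j}$ is $O\big((1-\abs{a_n})^{(p-2)+r(-1)}\cdot(1-\abs{a_n})\big)$-type, summing in $n$ and then in $j\ge k$ yields $O\big((2^{-k})^{1+p-2 \cdots}\big)$; the point is that everything collapses to a geometric series in $j-k$ and produces exactly $\mu_p(Q)^{1-r/p}$ up to a constant, i.e. $\mu_p(Q)=2^{-k p}$ up to constants and $\mu_p(Q)^{1-r/p}=2^{-kp(1-r/p)}=2^{-k(p-r)}$. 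One then upgrades from Carleson boxes to arbitrary measurable $E$ by the standard maximal-function/Whitney argument (or by invoking the characterization of $H^1_w$: in fact by the cited result of Ahern--Clark and Cullen--Nicolau every exponential Blaschke product has $B'\in H^1_w$, hence $B\in\tilde{\mathcal L}^1_w$, hence $B\in\mathcal L^p_w$ for all $p>1$ by Proposition~\ref{prop:inclusion}; but $B'\in L^p_w(\D,\mu_p)$ is precisely $B\in\B^p_w$ only after identifying $\mathcal L^p_w$ applied to $B'$ — so the honest statement is $B'\in H^1_w\subset \mathcal L^p_w$ as a space of functions, giving $B'\in L^p_w(\D,\mu_p)$, i.e. $B\in\B^p_w$). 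This last sentence is the quick proof of the easy direction: \emph{$B'\in H^1_w$ implies $B'\in L^p_w(\D,\mu_p)$ for every $1<p<\infty$ by Proposition~\ref{prop:inclusion}(i)}, and $B'\in H^1_w$ for exponential Blaschke products is known.

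\textbf{$B\in\B^p_w$ inner $\Rightarrow$ $B$ exponential Blaschke product.} Here one must rule out singular inner factors and show the zeros satisfy the Blaschke--annulus bound. First, from $B'\in L^p_w(\D,\mu_p)\subset L^p(\D,\mu_p)$-in-the-weak-sense one gets the pointwise bound $B\in A^{-1}$... no: directly, the Kolmogorov condition with $E$ a single Whitney square $W$ around a point $z_0$ with $1-\abs{z_0}\asymp\delta$ gives $\big(\fint_W \abs{B'}^r\big)^{1/r}\lesssim \delta^{-1}$, which by subharmonicity of $\abs{B'}^r$ gives $\abs{B'(z_0)}\lesssim (1-\abs{z_0})^{-1}$ — no new information. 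The substantive estimate comes from applying the Kolmogorov condition to a Carleson box $Q(I)$ and combining it with the classical identity $\int_{Q(I)}\abs{B'(z)}\,(1-\abs{z})\,dA(z)\asymp \sum_{a_n\in Q(I)}(1-\abs{a_n})$ for Blaschke products (and an analogous lower bound detecting the singular measure), valid up to constants via Green's formula / the fact that $\log(1/\abs{B})$ is the Green potential of the zeros plus the singular measure. Using Hölder to pass between the $L^r$ integral in Kolmogorov and this $L^1$ quantity: $\sum_{a_n\in Q(I)}(1-\abs{a_n})\lesssim \int_{Q(I)}\abs{B'}(1-\abs{z})\,dA \le \big(\int_{Q(I)}\abs{B'}^r\,d\mu_p\big)^{1/r}\big(\int_{Q(I)}(1-\abs{z})^{s}\,dA\big)^{1/s}$ with the right choice of exponents so that $d\mu_p=(1-\abs z)^{p-2}dA$ and the leftover power conspire; the Kolmogorov bound $\lesssim \mu_p(Q(I))^{1/r-1/p}\asymp \abs{I}^{(p-2+2)(1/r-1/p)}\cdot$ and the Hölder factor $\asymp \abs{I}^{(\text{power})}$ multiply to give exactly $\sum_{a_n\in Q(I)}(1-\abs{a_n})\lesssim \abs{I}$. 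That is the \emph{Carleson measure condition of order $1$} for the zeros, which forces (i) no singular part (the singular measure would violate the same inequality applied to shrinking boxes around a point in its closed support, since $\int_{Q(I)}\abs{B'}(1-\abs z)\,dA \gtrsim \sigma(I)$ and $\sigma(I)/\abs I\to\infty$ along some sequence) and (ii) the exponential/bounded-density-per-annulus condition for the zeros: summing $1-\abs{a_n}\asymp 2^{-j}$ over $a_n\in\A_j\cap Q(I)$ with $\abs I=2^{-j}$ gives (number of such zeros)$\cdot 2^{-j}\lesssim 2^{-j}$, i.e. a uniform bound $M$, and covering each annulus by $\asymp 2^j$ such boxes gives the global statement. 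The \textbf{main obstacle} is the bidirectional comparison between the weak-$L^p$-against-$\mu_p$ mass of $B'$ on a Carleson box and the zero-counting functional $\sum_{a_n\in Q}(1-\abs{a_n})$ (together with the singular measure): the upper bound is a clean Hölder computation, but extracting a usable \emph{lower} bound on $\int_{Q(I)}\abs{B'}(1-\abs z)\,dA$ in terms of the zeros/singular measure — uniformly enough to feed back into Kolmogorov and to kill the singular part — requires care, e.g. a Green's-theorem argument applied to $\log(1/\abs B)$ on a slightly enlarged box together with a Harnack/Schwarz-lemma control of the harmonic part, which is where the bulk of the technical work will lie.
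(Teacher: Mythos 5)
Your forward direction (exponential $\Rightarrow B\in\B^p_w$) is exactly the paper's argument: quote $B'\in H^1_w$ from Cima--Nicolau and apply the inclusion $H^1_w\subset\mathcal{L}^p_w$ from Proposition~\ref{prop:inclusion}(i); that part is fine. The converse, however, has a fatal gap at its final step. Everything your Carleson-box/Kolmogorov strategy can extract from $B'\in L^p_w(\D,\mu_p)$ is an averaged bound of the form $\int_{Q(I)}\abs{B'}\,dA\lesssim\abs{I}$ (this is precisely the estimate the paper derives, over sectorial sets, in the proof of Proposition~\ref{prop:B_finprod}), hence at best the Carleson measure condition $\sum_{a_n\in Q(I)}(1-\abs{a_n})\lesssim\abs{I}$ for the zeros. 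That condition characterizes finite products of interpolating Blaschke products, not exponential ones: the uniformly separated sequence consisting of $2^j$ equally spaced points on the circle of radius $1-2^{-j}$, $j\in\N$, generates a Carleson measure yet puts $2^j$ zeros in the annulus $\mathcal{A}_j$. Your own bookkeeping exposes the problem: you bound the number of zeros in $\mathcal{A}_j\cap Q(I)$ with $\abs{I}=2^{-j}$ by $M$ and then ``cover each annulus by $\asymp 2^j$ such boxes,'' which yields $M2^j$ zeros in $\mathcal{A}_j$, not $M$. The exponential condition is a count over the \emph{entire} annulus, uniformly in $j$, and no testing of the Kolmogorov inequality on boxes (which only sees $\int_E\abs{B'}^r$ for $r<p$, i.e.\ sub-critical strong-type information that an interpolating Blaschke product also possesses) can produce it.

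The missing ingredient is a pointwise lower bound for $\abs{B'}$ near the zeros, fed into the distribution-function inequality at the single critical level $\lambda\asymp 2^j$. After reducing to a finite product of interpolating Blaschke products (Propositions~\ref{prop:B_Blaschke} and~\ref{prop:B_finprod} --- the first of which also eliminates the singular factor cleanly via Ahern's theorem, in contrast to your route through a lower bound of $\int_{Q(I)}\abs{B'}(1-\abs{z})\,dA$ by $\sigma(I)$, which you correctly flag as the unproven ``main obstacle''), the paper invokes the Garnett--Nicolau lemma to produce points $\zeta_k$ pseudo-hyperbolically close to the zeros with $(1-\abs{\zeta_k}^2)\abs{B'(\zeta_k)}\geq\delta$, upgrades this via the Schwarz lemma to quasi-disjoint pseudo-hyperbolic discs $\Delta(\zeta_k,\eta)$ on which $\abs{B'}>\lambda$ whenever $\zeta_k\in\mathcal{A}_j$ with $j>J(\lambda)\asymp\log_2\lambda$, and then reads off from $\mu_p(\{\abs{B'}>\lambda\})\leq C\lambda^{-p}$ that $\mathcal{N}_j2^{-jp}\lesssim 2^{-jp}$, i.e.\ $\mathcal{N}_j=O(1)$. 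Some argument of this kind, exploiting the weak-type bound at every level $\lambda$ rather than its integrated consequences, must be added; as written your proof establishes only the strictly weaker conclusion that $B$ is a finite product of interpolating Blaschke products.
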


In view of \cite[Theorem~1]{CNpreprint} and Proposition~\ref{prop:inclusion}(i),
our contribution in Theorem~\ref{thm:weakBergman} is to prove that,
if $B$ is inner and $B'\in\mathcal{L}^p_w$ for some $1<p<\infty$, then $B$ is an exponential Blaschke product.
The following result characterizes inner functions whose derivative belongs to~$\tilde{\mathcal{L}}^1_w$,
which is a space intermediate to $H^1$ and $H^1_w$ by Proposition~\ref{prop:inclusion}(i).


\begin{theorem} \label{thm:p1}
Let $B$ be an inner function. Then, $B'\in\tilde{\mathcal{L}}^1_w$ if and only if $B$ is a~finite Blaschke product.
\end{theorem}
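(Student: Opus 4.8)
The "if" direction is trivial: a finite Blaschke product $B$ has $B'$ bounded in $\D$, so $(1-|z|)^{-1}|B'(z)|$ is dominated by $(1-|z|)^{-1}$, which lies in $L^1_w(\D,A)$ since $\mu_1(\{(1-|z|)^{-1}>\lambda\}) = A(\{|z|>1-1/\lambda\}) \asymp 1/\lambda$. So the content is the "only if" direction. The plan is to assume $B$ is inner with $(1-|z|)^{-1}|B'(z)| \in L^1_w(\D,A)$ and deduce that $B$ is a finite Blaschke product. By Proposition \ref{prop:inclusion}(i) we already know $\tilde{\mathcal{L}}^1_w \subset H^1_w$, so $B' \in H^1_w$, and hence by the already-quoted result of \cite{AC1974} and \cite{CNpreprint}, $B$ is an exponential Blaschke product; in particular $B$ is a Blaschke product with zeros $\{z_n\}$, no singular part, and the zeros are "exponentially distributed" — each annulus $\mathcal{A}_j$ carries at most $M$ of them. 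It remains to upgrade "exponential Blaschke product" to "finite Blaschke product" using the stronger hypothesis near $p=1$.

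The key step is a lower bound for $(1-|z|)^{-1}|B'(z)|$ on a suitable set, forcing the weak-$L^1$ quasi-norm to blow up unless there are only finitely many zeros. Here I would exploit that for a Blaschke product, $|B'(z)| \geq$ (up to constants) $\sum_n \frac{1-|z_n|^2}{|1-\bar z_n z|^2}\cdot|B(z)|$ is not quite right pointwise, but one does have good lower bounds for $|B'|$ on the circles $\partial D(0,r)$ or, more usefully, near the zeros themselves. Concretely, on the pseudohyperbolic disc of fixed small radius around each $z_n$, $|B(z)|$ is small and a Schwarz–Pick / Julia-type estimate gives $|B'(z)| \gtrsim (1-|z_n|)^{-1}$ on a definite portion of that disc (this is where the single zero $z_n$ dominates). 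Thus on a region of area $\asymp (1-|z_n|)^2$ around each $z_n$, the function $g(z) = (1-|z|)^{-1}|B'(z)|$ satisfies $g(z) \gtrsim (1-|z_n|)^{-2}$. If there are infinitely many zeros, one selects a sequence $z_{n_k}$ with $1-|z_{n_k}| \to 0$, and the associated pseudohyperbolic discs are essentially disjoint (passing to a subsequence using the exponential/separation structure), so the super-level set $\{g > \lambda\}$ with $\lambda \asymp (1-|z_{n_k}|)^{-2}$ contains the disc around $z_{n_k}$, giving $\lambda \cdot \mu_1(\{g>\lambda\}) \gtrsim (1-|z_{n_k}|)^{-2}\cdot(1-|z_{n_k}|)^2 = $ constant — which only shows boundedness, not a contradiction. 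So this crude approach is not enough; one must instead \emph{aggregate} the contributions: choose $\lambda$ comparable to $(1-|z_{N}|)^{-2}$ for the "deepest" zero among the first $N$, and observe that \emph{all} discs around zeros $z_n$ with $1-|z_n| \lesssim \lambda^{-1/2}$ contribute to $\{g>\lambda\}$, so $\mu_1(\{g>\lambda\}) \gtrsim \sum_{1-|z_n|\lesssim \lambda^{-1/2}} (1-|z_n|)^2$, and an exponential Blaschke product with infinitely many zeros has this sum growing faster than $\lambda^{-1}$ along a suitable sequence of $\lambda$'s — indeed for an exponential Blaschke product the zeros in $\mathcal{A}_j$ number at most $M$ but there must be at least one in infinitely many $\mathcal{A}_j$, and $\sum_{j \geq J, \ \mathcal{A}_j \text{ nonempty}} 2^{-2j}$ versus $2^{-J}$ forces a contradiction only if sufficiently many $\mathcal{A}_j$ are nonempty.

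I expect the main obstacle to be exactly this last counting/optimization: converting "infinitely many zeros, exponentially distributed" into a genuine violation of $\lambda \mu_1(\{g>\lambda\})^{1} \le C$. The honest route is likely the Kolmogorov characterization quoted in Section \ref{sec:notation}: $g = (1-|z|)^{-1}|B'| \in L^1_w(\D,A)$ iff for every $0<r<1$ there is $C$ with $\int_E g^r\,dA \le C\,A(E)^{1-r}$ for all measurable $E$. Testing this on $E = \mathcal{A}_J^{(0)}$, a single Carleson-type box or annular sector containing one zero $z_n$ with $1-|z_n| \asymp 2^{-J}$, together with the lower bound $g \gtrsim 2^{2J}$ on a sub-box of area $\asymp 2^{-2J}$, yields $\int_E g^r \, dA \gtrsim 2^{2Jr}2^{-2J}$ while $A(E)^{1-r} \asymp 2^{-J(1-r)}$ (or $2^{-2J(1-r)}$ depending on the box chosen); choosing $r$ close to $1$ makes the left side of larger order in $2^{J}$ than the right, so the inequality fails as $J\to\infty$ provided infinitely many annuli are nonempty — which is precisely the statement that $B$ is infinite. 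Thus $B$ has finitely many zeros and, being a Blaschke product with no singular factor, is a finite Blaschke product. The delicate points to get right are the Julia-type lower bound for $|B'|$ near a zero of an exponential Blaschke product (one needs the remaining factors to be bounded below on the relevant disc, which uses the $M$-boundedness of zeros per annulus and a standard estimate on products of pseudohyperbolic distances), and the correct choice of the test set $E$ and exponent $r$ so that the two sides of the Kolmogorov inequality separate.
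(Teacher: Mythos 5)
There is a genuine gap in the ``only if'' direction, and you essentially diagnose it yourself without resolving it. Your reduction to the case of an exponential Blaschke product via $\tilde{\mathcal{L}}^1_w\subset H^1_w$ is correct and matches the paper, and the ``if'' direction is fine. But every mechanism you then propose extracts the largeness of $g(z)=(1-\abs{z})^{-1}\abs{B'(z)}$ only from a pseudohyperbolic neighbourhood of each zero $z_n$, where $g\asymp(1-\abs{z_n})^{-2}$ on a set of area $\asymp(1-\abs{z_n})^2$; as you yourself compute, this yields only $\lambda\,\Area\{g>\lambda\}\lesssim 1$ and no contradiction. Aggregation does not rescue it: for an exponential Blaschke product, $\sum_{1-\abs{z_n}\lesssim\lambda^{-1/2}}(1-\abs{z_n})^2\leq M\sum_{j\geq J}2^{-2j}\asymp\lambda^{-1}$ even when \emph{every} annulus $\mathcal{A}_j$ is populated, so the sum never exceeds a constant multiple of $\lambda^{-1}$. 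The Kolmogorov test fares no better: with $E$ a Carleson box of side $2^{-J}$ one gets $\int_E g^r\,dA\gtrsim 2^{2Jr}2^{-2J}=2^{-2J(1-r)}$ while $A(E)^{1-r}\asymp 2^{-2J(1-r)}$, so the two sides are of exactly the same order for every $0<r<1$; your claim that taking $r$ close to $1$ separates them is false, and taking $E$ to be a whole annulus only makes the right-hand side larger. No contradiction can be derived from near-zero information alone.

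The missing idea --- the one the paper uses --- is the \emph{boundary} behaviour of $B'$. Since $B$ is an exponential Blaschke product, \cite[Corollary~3]{AC1974} gives $\abs{B'(\xi)}=\sum_k(1-\abs{z_k}^2)\abs{\xi-z_k}^{-2}$ for a.e.\ $\xi\in\partial\D$, hence $\abs{B'(\xi)}\gtrsim(1-\abs{z_k})^{-1}$ on the whole arc $I_k$ of length $2\pi(1-\abs{z_k})$ centred at $z_k/\abs{z_k}$. Egorov's theorem upgrades this to $\abs{B'(r\xi)}\gtrsim(1-\abs{z_k})^{-1}$ on a full box $H_k=\{r\xi : r_k<r<1,\ \xi\in E_k\}$ touching $\partial\D$, with $\abs{E_k}\gtrsim\abs{I_k}$. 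On $H_k$ one has $g(r\xi)\gtrsim\big((1-\abs{z_k})(1-r)\big)^{-1}$, so for every sufficiently large $\lambda$ the set $\{g>\lambda\}\cap H_k$ contains a strip of area $\asymp\abs{E_k}\cdot\big((1-\abs{z_k})\lambda\big)^{-1}\asymp\lambda^{-1}$: a contribution of a fixed multiple of $1/\lambda$ \emph{per zero}, uniformly in $k$ and valid for all large $\lambda$, rather than the borderline $(1-\abs{z_k})^2$ at the single scale $\lambda\asymp(1-\abs{z_k})^{-2}$. After passing to a subsequence making the $H_k$ pairwise disjoint, $\Area\{g>\lambda\}\gtrsim\#K(\lambda)/\lambda$ with $\#K(\lambda)\to\infty$ as $\lambda\to\infty$, contradicting $B'\in\tilde{\mathcal{L}}^1_w$. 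This boundary mechanism is precisely what your proposal lacks.
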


The subsequent sections are devoted for the proofs of new results.


\section{Proof of Proposition~\ref{prop:inclusion}}


\subsection{Proof of Proposition~\ref{prop:inclusion}(i)}

We proceed to prove each inclusion separately.


\subsubsection*{Proof of $H^1 \subset \tilde{\mathcal{L}}^1_w$}

Let $f\in H^1$. This implies that $M_\sphericalangle f\in L^1(\partial\D)$,
and hence by Chebyshev's inequality
\begin{equation} \label{eq:che}
  \lambda \, \big| \big\{ e^{i\theta} \in\partial\D \,: \, M_\sphericalangle f(e^{i\theta}) > \lambda \big\} \big|
  \leq \bnm{M_\sphericalangle f}_{L^1(\partial\D)} < \infty,
  \quad 0 < \lambda < \infty.
\end{equation}

We proceed to show that $f\in\tilde{\mathcal{L}}^1_w$. Suppose that $2<\lambda<\infty$ is given. Then,
\begin{align}
  & \Area \big\{z \in\D : \abs{f(z)} > \lambda (1-\abs{z}) \big\}\notag\\
  & \qquad = \int_0^{1-\lambda^{-1}} \big| \big\{ e^{i\theta} \in\partial \D  : \abs{f(re^{i\theta})}> \lambda ( 1 - r) \big\} \big| \,  r\, dr \label{eq:e1}\\
  & \qquad \quad +  \int_{1-\lambda^{-1}}^1 \big| \big\{ e^{i\theta} \in\partial \D  :  \abs{f(re^{i\theta})}> \lambda ( 1 - r) \big\} \big| 
  \,  r\, dr.\label{eq:e2}
\end{align}
Let $I_1$ and $I_2$ be the integrals in  \eqref{eq:e1} and \eqref{eq:e2}, respectively.
To compute $I_1$, let $K=K(\lambda)$ be the largest natural number such that $K\leq \log_2 \lambda$. This implies that
$1-2^K\lambda^{-1} < 1/2$, and hence by \eqref{eq:che} we deduce
\begin{align}
  I_1 
  & \leq \int_0^{1/2} \left| \left\{ e^{i\theta} \in\partial \D  :   M_\sphericalangle f(e^{i\theta}) > \lambda/2 \right\} \right| \, dr\notag\\
  & \quad + \sum_{k=1}^K \, \int_{1-2^{k} \lambda^{-1}}^{1 - 2^{k-1} \lambda^{-1}} 
  \left| \left\{ e^{i\theta} \in\partial \D  :   M_\sphericalangle f(e^{i\theta}) >2^{k-1} \right\} \right| \, dr\notag\\
  & \leq \frac{1}{\lambda} \, \bnm{M_\sphericalangle f}_{L^1(\partial\D)}
  + \frac{1}{\lambda} \, \sum_{k=1}^K 
  2^{k-1} \left| \left\{ e^{i\theta} \in\partial \D  :   M_\sphericalangle f(e^{i\theta}) >2^{k-1} \right\} \right|. \label{eq:defS}
\end{align}
Let $S$ be the sum in \eqref{eq:defS}. By the summation by parts, and \eqref{eq:che},  we obtain
\begin{align*}
  S & = \sum_{k=0}^{K-1} 
  \left( 2^{k+1} - 2^k \right) \left| \left\{ e^{i\theta} \in\partial \D  :   M_\sphericalangle f(e^{i\theta}) >2^k \right\} \right|\\
  & =  \left| \left\{ e^{i\theta} \in\partial \D  :   M_\sphericalangle f(e^{i\theta}) >2^K \right\} \right| 2^K 
  -  \left| \left\{ e^{i\theta} \in\partial \D  :   M_\sphericalangle f(e^{i\theta}) > 1 \right\} \right| \\ 
  &  \quad - \sum_{k=0}^{K-1} 2^{k+1} \Big(  \left| \left\{ e^{i\theta} \in\partial \D  :   M_\sphericalangle f(e^{i\theta}) >2^{k+1} \right\} \right|
  - \left| \left\{ e^{i\theta} \in\partial \D  :   M_\sphericalangle f(e^{i\theta}) >2^k \right\} \right| \Big)\\
  &  \leq \bnm{M_\sphericalangle f}_{L^1(\partial\D)}
  + \sum_{k=0}^{K-1} 2^{k+1}  \left| \left\{ e^{i\theta} \in\partial \D  :   2^k < M_\sphericalangle f(e^{i\theta}) \leq 2^{k+1} \right\} \right|.
\end{align*}
It follows that
\begin{equation*}
  S  \leq \bnm{M_\sphericalangle f}_{L^1(\partial\D)}
  + 2 \, \sum_{k=0}^{K-1}
  \int_{\left\{ e^{i\theta} \in\partial \D  \, : \,   2^k < M_\sphericalangle f(e^{i\theta}) \leq 2^{k+1} \right\}} M_\sphericalangle f(e^{i\theta})\, d\theta
   \leq  3 \, \bnm{M_\sphericalangle f}_{L^1(\partial\D)}.
\end{equation*}

Since $I_2 \leq 2\pi/\lambda$, we conclude that $f\in\tilde{\mathcal{L}}^1_w$.


\subsubsection*{Proof of $\tilde{\mathcal{L}}^1_w \subset H^1_w$}

If $f\in\tilde{\mathcal{L}}^1_w$, then
there exists a constant $C=C(f)$ with $0<C<\infty$ such that
\begin{equation*}
  \Area \big\{ z\in\D: \abs{f(z)} > \lambda (1-\abs{z}) \big\} \leq C/\lambda, \quad 0<\lambda<\infty.
\end{equation*}
Write $r_k=1-2^{-k}$ for $k\in\N$, for short. Now
\begin{align*}
  \frac{C}{\lambda 2^k} 
  & \geq \Area\set{z\in\D: \abs{f(z)} > \lambda 2^k (1-\abs{z})}\\
  & = \int_0^1 \big|\big\{e^{i\theta}\in\partial\D  :  \abs{f(re^{i\theta})} > \lambda 2^k (1-r) \big\}\big|\,  r \, dr\\
  & \geq \frac{1}{2} \int_{r_k}^{r_{k+1}} \big|\big\{e^{i\theta}\in\partial\D  : \abs{f(re^{i\theta})} > \lambda \big\}\big|\, dr,
  \quad 0<\lambda<\infty, \quad k\in\N.
\end{align*}
Since $r_{k+1}-r_k= 2^{-k-1}$ for all $k\in\N$, we conclude that
\begin{equation} \label{eq:target}
  \frac{1}{r_{k+1}-r_k} \, \int_{r_k}^{r_{k+1}} \big|\big\{e^{i\theta}\in\partial\D  : \abs{f(re^{i\theta})} > \lambda \big\}\big|\, dr
  \leq \frac{4C}{\lambda}, \quad 0<\lambda<\infty,
\end{equation}
for all $k\in\N$.

Fix $0<p<1$. By using the distribution function, Fubini's theorem implies that
\begin{align*}
  &  \frac{1}{r_{k+1}-r_k} \, \int_{r_k}^{r_{k+1}}  \left( \int_0^{2\pi} \abs{f(re^{i\theta})}^p \, d\theta \right) dr\\
  & \qquad \leq \frac{1}{r_{k+1}-r_k} \, \int_{r_k}^{r_{k+1}}  
  \left( 2\pi + \int_1^\infty 
    \big| \big\{e^{i\theta} \in\partial\D :  \abs{f(re^{i\theta})}>\lambda\big\} \big|
    \, p \lambda^{p-1} \, d\lambda \right)  dr\\
  & \qquad = 2\pi + \int_1^\infty  \left( \frac{1}{r_{k+1}-r_k} \, \int_{r_k}^{r_{k+1}}  
    \big| \big\{e^{i\theta} \in\partial\D  :  \abs{f(re^{i\theta})}>\lambda\big\} \big|
    \, dr \right) \, p \lambda^{p-1} \, d\lambda
\end{align*}
for all $k\in\N$. By \eqref{eq:target} there exists a constant $K=K(p,f)$ with $0<K<\infty$ such that
\begin{equation} \label{eq:hp}
  \frac{1}{r_{k+1}-r_k} \, \int_{r_k}^{r_{k+1}}  \left( \int_0^{2\pi} \abs{f(re^{i\theta})}^p \, d\theta \right)  dr
   \leq K <\infty, \quad k\in\N.
\end{equation}
Since the mapping $r \mapsto \nm{f_r}_{L^p(\partial\D)}^p$ is non-decreasing \cite[p.~9]{D1970}, 
\eqref{eq:hp} yields
\begin{equation*}
  \int_0^{2\pi} \abs{f(r_k e^{i\theta})}^p \, d\theta \leq K < \infty, \quad k\in\N,
\end{equation*}
from which we deduce that $f\in H^p$.

Suppose that $0<\lambda<\infty$ is given.  Since $f$ belongs to $H^p$ for $0<p<1$, 
the radial limit $f(e^{i\theta})= \lim_{r\to 1^-} f(re^{i\theta})$ exists for a.e. $e^{i\theta}\in\partial\D$.
By applying Egorov's theorem \cite[p.~73]{R1987}, we conclude
that this convergence is uniform outside a set of arbitrarily small Lebesgue measure.
In particular, there exist a constant $r^\star=r^\star(\lambda,f)$ with $0<r^\star<1$, 
and a set $E=E(\lambda,f)\subset \partial\D$ 
of length $\abs{E}\leq 1/\lambda$, such that
\begin{equation*}
  \big| f(r e^{i\theta})-f(e^{i\theta}) \big| \leq \lambda/2, \quad r^\star < r< 1, \quad e^{i\theta}\in \partial\D\setminus{E}.
\end{equation*} 
Observe that  $\set{e^{i\theta} \in\partial\D: \abs{f(e^{i\theta})} > \lambda} \subset A \cup B$, 
where $A=A(\lambda,f)$ and $B=B(\lambda,f)$ are subsets of $\partial\D$ given by
\begin{align*}
  A & = \big\{e^{i\theta} \in\partial\D : 
    \text{$\big|f(re^{i\theta}) - f(e^{i\theta})\big| > \lambda/2$ for some $r$ with $r^\star <r<1$}\big\},\\
  B & = \big\{e^{i\theta} \in\partial\D : \text{$\abs{f(e^{i\theta})} > \lambda$, $\big|f(re^{i\theta}) - f(e^{i\theta}) \big|
    \leq \lambda/2$ for all $r$ with $r^\star <r<1$}\big\}.
\end{align*}
Since $A\subset E$, we conclude that $\abs{A}\leq 1/\lambda$.
We proceed to consider the size of~$B$. Note that
$B \subset \set{e^{i\theta}\in\partial\D : \abs{f(re^{i\theta})} > \lambda/2}$ for any $r$ with $r^\star<r<1$. Consequently,
if $k\in\N$ is sufficiently large such that $r^\star \leq r_k<1$, then \eqref{eq:target} implies that
\begin{equation*}
  \abs{B} 
  \leq \frac{1}{r_{k+1}-r_k} \int_{r_k}^{r_{k+1}} \big| \big\{e^{i\theta}\in\partial\D : \abs{f(re^{i\theta})} > \lambda/2 \big\} \big| \, dr
   \leq \frac{8C}{\lambda}.
\end{equation*}
In conclusion,
  $\big| \big\{ e^{i\theta}\in\partial\D : \abs{f(e^{i\theta})} > \lambda \big\} \big| 
  \leq \abs{A} + \abs{B}
  \leq (1+8C)/\lambda$,
where $C$ is independent of $\lambda$. This proves that $f(e^{i\theta}) \in L^1_w(\partial\D)$. Since $f\in H^p$
for $0<p<1$, we have $f\in H^1_w$ by \cite[Theorem~1.10.4]{CMR2006}.


\subsubsection*{Proof of $H^1_w \subset \mathcal{L}^p_w$ for  $1<p<\infty$}

Let $f\in H^1_w$, and hence $M_\sphericalangle f\in L^1_w(\partial\D)$, 
and let $1<p<\infty$ be fixed. Suppose that $\lambda_0<\lambda<\infty$ is given, where 
\begin{equation*}
  \lambda_0 
  = \lambda_0(\alpha,f)
  = \max\left\{ \sup\set{\abs{f(z)} : \abs{z}\leq \frac{\alpha-1}{\alpha+1}}, 
    \frac{\bnm{M_\sphericalangle f}_{L^1_w(\partial\D)}}{2\sqrt{\alpha^2-1}} \right\}.
\end{equation*}
Here $\alpha>1$ is the (fixed) constant in \eqref{eq:stolz}, which determines the aperture
of the non-tangential region.
Since $\set{ e^{i\theta}\in\partial\D : M_\sphericalangle f(e^{i\theta}) > \lambda}$ is an open subset of $\partial\D$, 
there exists a family of pairwise disjoint open arcs $I_k = I_k(\lambda,f) \subset\partial\D$ for $k\in\N$,
such that $\bigcup_{k\in\N} I_k = \set{ e^{i\theta} \in\partial\D: M_\sphericalangle f(e^{i\theta}) > \lambda}$, and
\begin{equation} \label{eq:sumlambda}
  \sum_{k\in\N} \, \abs{I_k} 
  \leq \frac{\bnm{M_\sphericalangle f}_{L^1_w(\partial\D)}}{\lambda}.
\end{equation}

For each arc $I\subset\partial\D$ we define a corresponding (sectorial) domain $T(I)$ by
\begin{equation*}
  T(I) = \set{z\in\D : \text{$z/\abs{z} \in I$ and } 1 - \abs{z} \leq \frac{\abs{I}}{2\sqrt{\alpha^2-1}}}.
\end{equation*}
Since $\lambda>\lambda_0\geq \sup\big\{ \abs{f(z)} : \abs{z}\leq (\alpha-1)/(\alpha+1)\big\}$, we obtain
\begin{equation*} 
  \big\{z\in\D : \abs{f(z)} > \lambda\big\} \subset \bigcup_{k\in\N} \, T(I_k),
\end{equation*}
which yields
$\mu_p\big( \big\{z\in\D : \abs{f(z)} > \lambda\big\} \big) \leq \sum_{k\in\N} \mu_p \big( T(I_k) \big)$.
Since there exists a constant $C=C(\alpha,p)$ with $0<C<\infty$, such that
\begin{equation*} 
  \mu_p\big( T(I_k) \big)  = \int_{T(I_k)} (1-\abs{z})^{p-2} \, dA(z)
  \leq C \, \abs{I_k}^p, \quad k\in\N,
\end{equation*}
we deduce that
\begin{equation*}
  \mu_p\big( \big\{z\in\D : \abs{f(z)} > \lambda\big\} \big)
  \leq C \, \sum_{k\in\N} \, \abs{I_k}^p
  \leq C \left( \, \sum_{k\in\N} \, \abs{I_k} \right)^p
  \leq C \, \frac{\bnm{M_\sphericalangle f}_{L^1_w(\partial\D)}^p}{\lambda^p}
\end{equation*}
by \eqref{eq:sumlambda}. Hence $f\in\mathcal{L}^p_w$.


\subsubsection*{Proof of $\mathcal{L}^p_w\subset A^{-1}$ for $1<p<\infty$}

Suppose that $f\in \mathcal{L}^p_w$ for some $1<p<\infty$. 
Let $a\in\D$, and suppose that $E_a=D(a,(1-\abs{a})/2)$ is a Euclidean disc of radius $(1-\abs{a})/2$.
The Kolmogorov condition mentioned in Section~\ref{sec:notation} (with $r=1$) shows that
there exists a constant $C_1=C_1(p,f)$ with $0<C_1<\infty$ such that
\begin{equation*}
  \int_{E_a} \abs{f(z)} (1-\abs{z})^{p-2} \, dA(z) \leq C_1 \left( \, \int_{E_a} (1-\abs{z})^{p-2} \, dA(z) \right)^{1-1/p},
  \quad a\in\D.
\end{equation*}
By the subharmonicity of $\abs{f}$, and the Kolmogorov condition above,
there exists a~constant $C_2=C_2(p,f)$ with $0<C_2<\infty$, such that
\begin{align*}
  \abs{f(a)} 
  & \leq \frac{4}{\pi (1-\abs{a})^2} \int_{E_a} \abs{f(z)} \, dA(z)\\
  & \leq \frac{2^p}{\pi (1-\abs{a})^p} \, \int_{E_a} \abs{f(z)} (1-\abs{z})^{p-2} \, dA(z)
  \leq \frac{C_2}{(1-\abs{a})}, \quad a\in\D,
\end{align*}
and hence $f\in A^{-1}$.


\subsection{Proof of Proposition~\ref{prop:inclusion}(ii)}

Suppose that $f\in \mathcal{L}^p_w$ for some $1<p<\infty$, and let $p<q<\infty$. We proceed to prove that $f\in\mathcal{L}^q_w$.
Let $2 < \lambda < \infty$ be given, and take $K=K(\lambda)\in\N$ such that $2^K< \lambda \leq 2^{K+1}$. Now
\begin{equation*}
  \mu_q \big( \big\{ z\in\D : \abs{f(z)} > \lambda \big\} \big)
  \leq \sum_{k=K}^\infty \, \int_{\{z\in\D \, :\,  2^{k+1} \geq \abs{f(z)} > 2^{k}\}} (1-\abs{z})^{q-2}\, dA(z).
\end{equation*}

According to Proposition~\ref{prop:inclusion}(i) we may suppose that $f\in A^{-1}$. Consequently,
if $k\in\N$ and $2^k < \abs{f(z)}$, then $1-\abs{z} < 2^{-k} \, \nm{f}_{A^{-1}} $. We conclude that
\begin{align*}
  \mu_q \big( \big\{ z\in\D : \abs{f(z)} > \lambda \big\} \big)
  & \leq \nm{f}_{A^{-1}}^{q-p} \sum_{k=K}^\infty 2^{kp-kq} \, \int_{\{z\in\D \, :\,  \abs{f(z)} > 2^{k}\}} (1-\abs{z})^{p-2}\, dA(z)\\
  & \leq \frac{\nm{f}_{A^{-1}}^{q-p} \, \nm{f}_{L^p_w(\D,\mu_p)}^p}{1-2^{-q}}\, \frac{2^q}{\lambda^q}.
\end{align*}
This means that $f\in \mathcal{L}^q_w$, and we are done.


\section{Proof of Theorem~\ref{thm:weakBergman}}

The point of departure is a discussion of certain auxiliary results, which show
that inner functions whose derivative belongs to $\mathcal{L}^p_w$ for some $1<p<\infty$
reduce to finite products of interpolating Blaschke products.


\begin{proposition} \label{prop:B_Blaschke}
If $B$ is inner, and if $B'\in \mathcal{L}^p_w$ for some $1<p<\infty$, then $B$ is a~Blaschke product.
\end{proposition}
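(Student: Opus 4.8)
The plan is to show that if $B$ is inner with $B' \in \mathcal{L}^p_w$, then the singular inner factor of $B$ is trivial, so $B$ reduces to a Blaschke product (times a unimodular constant). Write $B = \xi B_0 S$ in the canonical factorization, where $B_0$ is a Blaschke product and $S$ is singular inner with associated positive singular measure $\sigma$ on $\partial\D$. The strategy is to use the point-wise growth bound from Proposition~\ref{prop:inclusion}(i): since $B' \in \mathcal{L}^p_w \subset A^{-1}$, we have
\begin{equation*}
  \abs{B'(z)} \leq \frac{C}{1-\abs{z}}, \quad z\in\D,
\end{equation*}
for some constant $C$. This is a strong regularity constraint that singular inner functions violate near the support of $\sigma$.

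First I would recall the standard estimate relating the derivative of an inner function to the Poisson-type kernel. For the singular inner function $S$ with measure $\sigma$, one has $\abs{S'(z)} = \abs{S(z)}\,\abs{(\log S)'(z)}$, and $(\log S)'(z) = -\int_{\partial\D} \frac{2e^{it}}{(e^{it}-z)^2}\,d\sigma(t)$, so that for $z = re^{i\theta}$,
\begin{equation*}
  \abs{S'(re^{i\theta})} = \abs{S(re^{i\theta})} \, \left| \int_{\partial\D} \frac{2e^{it}}{(e^{it}-re^{i\theta})^2}\, d\sigma(t) \right|.
\end{equation*}
Then I would argue that if $\sigma \neq 0$, one can find a point $e^{i\theta_0} \in \partial\D$ that is a point of positive "density" for $\sigma$ in an appropriate sense (e.g. $\limsup_{h\to 0^+} \sigma(I_h(\theta_0))/h > 0$, or even $= \infty$ at a point in the closed support, which exists since $\sigma$ is singular), and along the radius $z = re^{i\theta_0}$ show that $(1-\abs{z})\abs{S'(z)}$ does not stay bounded — because the real part of the integral above is comparable to $(1-r)^{-2}\sigma(I_{1-r}(\theta_0))$ up to the factor $\abs{S(z)}$, which also needs to be controlled from below. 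Combined with $\abs{B_0(z)} \leq 1$, this forces $(1-\abs{z})\abs{B'(z)}$ to be unbounded unless one is careful, since $B' = \xi(B_0' S + B_0 S')$.

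The main obstacle is the interaction between the Blaschke factor $B_0$ and the singular factor $S$: the zeros of $B_0$ can in principle cancel the growth of $S'$ along a chosen radius, and the factor $\abs{S(z)}$ in the estimate for $\abs{S'(z)}$ decays near the support of $\sigma$. To handle this, rather than working along a single radius I would instead estimate $\mu_p$-measures of superlevel sets directly: I expect the cleanest route is to show that for a suitable region $R_h$ (a Carleson-type box of side $h$ based at $e^{i\theta_0}$), the set $\{z \in R_h : \abs{B'(z)} > c/h\}$ has $\mu_p$-measure bounded below by a constant multiple of $h^p$ times a quantity that blows up as $h \to 0$ when $\sigma$ has infinite upper density at $\theta_0$, contradicting $B' \in L^p_w(\D,\mu_p)$ via the Kolmogorov condition from Section~\ref{sec:notation}. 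Alternatively, and perhaps more in the spirit of the paper, I would try to invoke the already-established chain $\mathcal{L}^p_w \subset A^{-1}$ together with a known characterization: an inner function $B$ with $B' \in A^{-1}$ must be a Blaschke product — this is essentially because $S' \in A^{-1}$ fails for any nontrivial singular inner function, a fact that can be extracted from the classical theory (the derivative of a singular inner function is never in $A^{-1}$, e.g. by testing along a sequence approaching a point of infinite density of $\sigma$ while using the lower bound $\abs{S(re^{i\theta_0})} \geq \exp(-C\sigma(I_{1-r}(\theta_0))/(1-r))$ and choosing the sequence so that $\sigma(I_{1-r}(\theta_0))/(1-r) \to \infty$ slowly enough that $\abs{S(z)}$ stays bounded below while $\abs{(\log S)'(z)}$ still forces a contradiction). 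I would expect to spend most of the effort making this density argument quantitatively correct, balancing the decay of $\abs{S(z)}$ against the growth of $\abs{(\log S)'(z)}$.
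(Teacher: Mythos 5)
There is a genuine gap, and it sits at the heart of your proposal: the statement that a nontrivial singular inner function $S$ cannot have $S'\in A^{-1}$ is false, so the "cleaner" route you lean on collapses. By the Schwarz--Pick lemma, \emph{every} holomorphic self-map $f$ of the disc satisfies $(1-\abs{z}^2)\abs{f'(z)}\leq 1-\abs{f(z)}^2\leq 1$, so $f'\in A^{-1}$ automatically for every inner function; the inclusion $\mathcal{L}^p_w\subset A^{-1}$ therefore carries no information whatsoever about the singular factor. You can see the cancellation explicitly for the atomic singular function $S(z)=\exp\bigl(-(1+z)/(1-z)\bigr)$: writing $u=(1-\abs{z}^2)/\abs{1-z}^2$ one gets
\begin{equation*}
  (1-\abs{z})\,\abs{S'(z)}\;\leq\;2\,\frac{1-\abs{z}}{\abs{1-z}^2}\,e^{-u}\;\leq\;2\,u\,e^{-u}\;\leq\;2/e ,
\end{equation*}
so the decay of $\abs{S(z)}$ exactly kills the growth of $\abs{(\log S)'(z)}$ near the mass point. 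This is the very "obstacle" you flag yourself, but it is not merely an obstacle to be balanced away --- it defeats any argument based on pointwise growth of order $(1-\abs{z})^{-1}$. Your first route (lower bounds for $\mu_p$-measures of superlevel sets in Carleson boxes, contradicting the Kolmogorov condition) is at least not doomed in principle, but as written it is only a plan: the quantitative lower bound on where $\abs{B'}$ is large, in the presence of both the decay of $\abs{S}$ and possible interference from the Blaschke factor, is precisely the hard part and is left unproved.

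For comparison, the paper's proof is two lines and avoids all of this: since $\mu_p$ is a finite measure on $\D$ for $p>1$, the Kolmogorov condition gives $L^p_w(\D,\mu_p)\subset L^q(\D,\mu_p)$ for every $0<q<p$, hence
\begin{equation*}
  \int_{\D}\abs{B'(z)}^q (1-\abs{z})^{p-2}\,dA(z)<\infty .
\end{equation*}
Choosing $q$ with $p-1/2\leq q<p$ and invoking Ahern's theorem \cite[p.~736]{A1983} (which says that such an integrability condition on $B'$ forces the singular factor to be trivial) yields that $B$ is a Blaschke product. If you want to salvage your approach, you would essentially have to reprove Ahern's theorem; the efficient move is to upgrade the weak-type hypothesis to a strong $L^q$ bound against the same weight and then quote the known result.
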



\begin{proof}
By the assumption $B'\in L^p_w(\D,\mu_p)$, and consequently $B' \in L^q(\D,\mu_p)$ for any $0<q<p$.
That is,
\begin{equation*}
  \int_{\D} \abs{B'(z)}^q (1-\abs{z})^{p-2} \, dA(z)<\infty, \quad 0<q<p.
\end{equation*}
If we choose $q$ to satisfy $p-1/2\leq q<p$, then we conclude that $B$ is a Blaschke product by \cite[p.~736]{A1983}.  
\end{proof}

A Blaschke product $B$ is called interpolating provided that its zeros $\{z_k\}_{k\in\N}$ form
a uniformly separated sequence in $\D$. That is to say that there exists a constant $\delta=\delta(B)$ with
$0<\delta<1$ such that
\begin{equation} \label{eq:unisep}
  \inf_{k\in\N} \, \prod_{n\in\N\setminus\set{k}} \left| \frac{z_k-z_n}{1-\overline{z}_k z_n} \right|
  = \inf_{k\in\N} \, (1-\abs{z_k}^2) \abs{B'(z_k)} \geq \delta.
\end{equation}


\begin{proposition} \label{prop:B_finprod}
If $B$ is a Blaschke product, and if $B'\in \mathcal{L}^p_w$ for some $1<p<\infty$, then
$B$ is a finite product of interpolating Blaschke products.
\end{proposition}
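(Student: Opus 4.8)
The plan is to show that a Blaschke product $B$ with $B' \in \mathcal{L}^p_w$ has zero set that decomposes into finitely many uniformly separated sequences. The natural dichotomy is between the ``deep'' part of the zero set (zeros accumulating near the boundary) where one must produce separation, and the bounded part where only finitely many zeros live. So first I would recall the classical criterion: a Blaschke product is a finite product of interpolating Blaschke products if and only if its zero sequence $\{z_k\}$ is a finite union of uniformly separated sequences, which in turn (by McKenna / Vinogradov, or via the Carleson measure characterization) is equivalent to the zeros being \emph{uniformly separated in a weak sense} plus the counting measure $\sum_k (1-|z_k|)\delta_{z_k}$ being a Carleson measure. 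Actually the cleanest route: $B$ is a finite product of interpolating Blaschke products $\iff$ there is $\e>0$ so that each pseudohyperbolic disc of radius $\e$ contains a bounded number of zeros of $B$, \emph{and} $\sum_k(1-|z_k|)\delta_{z_k}$ is Carleson — but by \cite[Theorem~3]{AC1974}, once $B' \in \mathcal{L}^p_w \subset A^{-1}$ and we establish the Carleson condition and the uniform boundedness of zeros in Carleson boxes, we can invoke the known structure theorem.

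The key mechanism is the pointwise bound $B' \in A^{-1}$, which holds by Proposition~\ref{prop:inclusion}(i). If $z_0$ is a zero of $B$, then in the pseudohyperbolic disc of radius $1/2$ centered at $z_0$ one has, by Schwarz--Pick type estimates combined with $|B'(z)| \le \nm{B'}_{A^{-1}}/(1-|z|)$, a control on $|B(z)|$ from below away from a definite pseudohyperbolic neighbourhood of the zeros. This is the standard argument (going back to Ahern--Clark) showing that $B' \in A^{-1}$ forces the zeros to be \emph{separated into a bounded number of uniformly separated subsequences locally}: more precisely, there is $N = N(\nm{B'}_{A^{-1}})$ such that every pseudohyperbolic disc of some fixed radius meets at most $N$ zeros, or rather the zero set is a union of at most $N$ uniformly separated sequences. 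I would carry this out by the usual covering/combinatorial argument on the hyperbolic metric.

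Next I would extract the Carleson condition. Since $B' \in L^p_w(\D,\mu_p)$, the Kolmogorov condition (Section~\ref{sec:notation}) with any $0<r<p$ gives $\int_E |B'|^r\,d\mu_p \le C\,\mu_p(E)^{1-r/p}$ for all measurable $E$. Applying this with $E$ a Carleson box $Q(I) = \{z : z/|z| \in I,\ 1-|z| \le |I|\}$, and using $\mu_p(Q(I)) \asymp |I|^p$, yields $\int_{Q(I)} |B'|^r\,d\mu_p \le C|I|^{p-r}$. On the other hand, $(1-|z_k|^2)|B'(z_k)| \ge$ (a lower bound coming from the near-interpolating structure just established) on suitable discs around the zeros, so integrating $|B'|^r$ over pseudohyperbolic discs about the zeros inside $Q(I)$ and comparing gives $\sum_{z_k \in Q(I)} (1-|z_k|) \le C|I|$, i.e. the zero counting measure is Carleson. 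Combining the bounded-multiplicity/local-separation property with the Carleson property, the classical structure theorem (McKenna, or \cite[Theorem~3]{AC1974} and its converse) identifies $B$ as a finite product of interpolating Blaschke products.

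The main obstacle I anticipate is the first step: converting the mild pointwise decay $B' \in A^{-1}$ (which is much weaker than, say, $B' \in H^\infty$-type control) into the statement that the zeros split into \emph{boundedly many} uniformly separated sequences. The subtlety is that $A^{-1}$ growth alone does not separate zeros pseudohyperbolically; what one really gets is that in each hyperbolic unit ball the number of zeros is bounded, which is exactly the ``$N$-fold interpolating'' condition, and one must be careful that the constant $N$ depends only on $\nm{B'}_{A^{-1}}$ and not on the box. Once that local statement is in hand, the passage to the global Carleson estimate via the Kolmogorov condition is routine, and the final assembly is a citation.
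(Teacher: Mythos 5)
There is a genuine gap, and it sits exactly where you anticipated trouble: your first step cannot work because the hypothesis it rests on is vacuous. By the Schwarz--Pick lemma, \emph{every} inner function $I$ satisfies $(1-\abs{z})\abs{I'(z)} \leq (1-\abs{I(z)}^2)(1-\abs{z})/(1-\abs{z}^2) \leq 1$, so $I' \in A^{-1}$ with norm at most $1$ regardless of the distribution of its zeros. Hence $B' \in A^{-1}$ carries no information and cannot force the zeros to split into boundedly many separated sequences (a Blaschke product with a non-Carleson zero measure, or with unboundedly clustered zeros, still has $B'\in A^{-1}$). Your second step then compounds the problem: the lower bound $(1-\abs{z_k}^2)\abs{B'(z_k)} \geq \delta$ that you feed into the Kolmogorov estimate \emph{is} the uniform separation condition \eqref{eq:unisep}, i.e.\ essentially the conclusion of the proposition. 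In the paper this bound (in the form \eqref{eq:point}, via \cite[Lemma~1]{GN1996}) only becomes available \emph{after} Proposition~\ref{prop:B_finprod} is established, and it is then used in the proof of Theorem~\ref{thm:weakBergman}, not here. So your argument is circular at the one point where real work is needed.

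The paper's actual route avoids any lower bound on $\abs{B'}$ at the zeros by arguing by contradiction. If $B$ is not a finite product of interpolating Blaschke products, then $\sum_k(1-\abs{z_k})\delta_{z_k}$ is not Carleson (the same \cite[Lemma~21]{MS1979} equivalence you invoke), and Lemma~\ref{lemma:needed} converts the unbounded Carleson quotients into pseudohyperbolic discs $\Delta(\zeta_k,m_k)$ with $m_k\to 1$ on which $\abs{B}$ tends uniformly to $0$. The contradiction then comes from an exponent mismatch over the tents $E_k$ above the dilated arcs $MI_k$: since $\abs{B}\leq 1/2$ at depth $\abs{I_k}$ under all of $MI_k$ while $\abs{B}=1$ a.e.\ on $\partial\D$, integrating $\abs{B'}$ along radii gives $\int_{E_k}\abs{B'}\,dA \gtrsim M\abs{I_k}$; on the other hand the Kolmogorov condition for $L^p_w(\D,\mu_p)$ combined with H\"older gives $\int_{E_k}\abs{B'}\,dA \lesssim M^{1-1/p}\abs{I_k}$, and $M^{1-1/p} = o(M)$. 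If you want to salvage your outline, you would need to replace your first step by some such mechanism that extracts information from the \emph{weak-$L^p$ integrability} of $B'$ (which genuinely constrains $B$) rather than from its pointwise growth (which does not).
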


Before the proof of Proposition~\ref{prop:B_finprod} we consider a standard lemma,
whose proof bears similarity to that of \cite[Lemma~1]{GN1996}. 
The proof of Lemma~\ref{lemma:needed} is given for the convenience of the reader.
For $\zeta\in\D$, let
\begin{equation*}
  \Delta(\zeta,m)= \big\{ z\in\D : \abs{z-\zeta} < m \, \abs{1-\overline{z}\zeta} \big\}
\end{equation*}
be an open pseudo-hyperbolic disc of the pseudo-hyperbolic radius $0<m<1$.


\begin{lemma} \label{lemma:needed}
Suppose that $B$ is a Blaschke product, which is not a finite product of interpolating Blaschke products.
Then there exist sequences $\set{\zeta_k}_{k\in\N}\subset \D$, and $\set{m_k}_{k\in\N}\subset (0,1)$, such that 
\begin{equation} \label{eq:suplimit}
  \lim_{k\to\infty} m_k = 1, \quad \text{and} \quad \lim_{k\to\infty} \left( \sup_{z\in \Delta(\zeta_k,m_k)} \, \abs{B(z)} \right) = 0.
\end{equation}
\end{lemma}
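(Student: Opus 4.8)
The plan is to unpack the hypothesis via a classical characterization of Blaschke products that are \emph{not} finite products of interpolating ones. The key structural fact I would invoke is that a Blaschke product $B$ fails to be a finite product of interpolating Blaschke products precisely when its zero set $\{z_k\}$ does not split into finitely many uniformly separated subsequences; by a result of Carleson--Garnett style (and as used in the proof of \cite[Lemma~1]{GN1996}), this is equivalent to the condition
\begin{equation*}
  \inf_{n\in\N} \, \inf_{k\ne n} \, \left| \frac{z_k - z_n}{1 - \overline{z}_k z_n} \right| = 0
\end{equation*}
\emph{being insufficient to fail by itself} --- more precisely, one shows that for every $N\in\N$ and every $\e>0$ there is a zero $z_k$ of $B$ such that at least $N$ other zeros of $B$ lie within pseudo-hyperbolic distance $\e$ of $z_k$. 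Equivalently, writing $B = \prod_j B_j$ as a product of finitely many subproducts would force each $B_j$ to be interpolating, and the failure means the zeros cluster arbitrarily densely. From such clustering one extracts, for each $k$, a point $\zeta_k$ (a zero of $B$, or a point close to many zeros) around which $B$ has many zeros packed into a small pseudo-hyperbolic ball.

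The core of the argument is then quantitative. First I would fix the dichotomy: either the zeros of $B$ accumulate so that arbitrarily large numbers of them sit in pseudo-hyperbolic discs of vanishing radius (handled by choosing $\zeta_k$ near such a cluster and $m_k$ shrinking), or --- the harder case --- the zeros are ``spread out'' in a way that still prevents a finite interpolating decomposition. In the second case one uses the standard fact that if no finite union of uniformly separated sequences covers $\{z_k\}$, then for each $k$ there is a point $\zeta_k\in\D$ and a radius $\rho_k$ such that the pseudo-hyperbolic disc $\Delta(\zeta_k,\rho_k)$ contains at least $k$ zeros of $B$; by the sub-mean-value property of $\log|B|$ (or by directly estimating the Blaschke product on the disc), the presence of many zeros in a fixed pseudo-hyperbolic disc forces $|B|$ to be small on a slightly smaller concentric disc. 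Choosing $m_k$ appropriately (slightly smaller than $\rho_k$, but still tending to $1$ since $\rho_k \to 1$ can be arranged) gives $\sup_{z \in \Delta(\zeta_k,m_k)} |B(z)| \to 0$. The condition $m_k \to 1$ is obtained because having $k$ zeros in a disc while keeping the disc small in pseudo-hyperbolic radius is impossible: the pseudo-hyperbolic measure of a disc of radius $\rho$ is bounded, so packing $k$ points in forces either $\rho_k \to 1$ or the points to be pseudo-hyperbolically close to each other, and both scenarios can be converted into the stated conclusion after passing to a subsequence.

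The main obstacle I anticipate is the bookkeeping needed to produce a \emph{single} sequence of discs $\Delta(\zeta_k, m_k)$ with \emph{both} $m_k \to 1$ and $\sup_{\Delta(\zeta_k,m_k)} |B| \to 0$ simultaneously --- these two requirements pull in opposite directions (larger discs make the sup larger), so the construction must exploit that ``non-finite-interpolating'' gives control with room to spare. Concretely, the trick (as in \cite{GN1996}) is: since $B$ is not a finite product of interpolating Blaschke products, for each $k$ one can find $z_{j_k}$ a zero of $B$ and a radius $r_k < 1$ such that $(1-|z_{j_k}|^2)|B_k'(z_{j_k})|$ is tiny, where $B_k$ is the subproduct of $B$ with zeros in $\Delta(z_{j_k}, r_k)$ --- and one has freedom to take $r_k$ as close to $1$ as desired while still controlling the count. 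One then applies the elementary inequality that a Blaschke product with $n$ zeros in $\Delta(\zeta, m)$ satisfies $|B(z)| \le \left(\frac{|z - w|}{|1-\overline{z}w|}\right)$-type bounds — iterating, $\sup$ over a concentric disc of radius $m' < m$ is at most a geometric factor to the power $n$, which $\to 0$. Passing to a diagonal subsequence and relabeling yields \eqref{eq:suplimit}. I would present this cleanly by first stating the structural characterization as a preliminary observation, then doing the disc construction, then the pointwise estimate on $B$, and finally the limit extraction.
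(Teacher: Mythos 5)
There are two genuine gaps. First, the structural characterization you invoke is not correct: the failure of $B$ to be a finite product of interpolating Blaschke products is \emph{not} equivalent to the zeros clustering in arbitrarily small pseudo-hyperbolic discs. A zero sequence can be uniformly separated (no two zeros within a fixed positive pseudo-hyperbolic distance of each other) and still fail to be a finite union of interpolating sequences; the standard example places a separated hyperbolic lattice of increasing depth inside disjoint Carleson boxes $Q_n$, so that $\sum_{z_k\in Q_n}(1-\abs{z_k})/l(Q_n)\to\infty$ while no small pseudo-hyperbolic disc contains two zeros. The correct tool --- and the one the paper uses --- is the McDonald--Sundberg characterization: $B$ is a finite product of interpolating Blaschke products if and only if $\mu=\sum_k(1-\abs{z_k})\,\delta_{z_k}$ is a Carleson measure. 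Its negation produces boxes $Q_j$ with $S_j=\mu(Q_j)/l(Q_j)\to\infty$, and it is this \emph{weighted} ratio, not a raw count of zeros, that drives the proof.

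Second, the quantitative step in your ``second case'' is false: many zeros in $\Delta(\zeta_k,\rho_k)$ do not force $\abs{B}$ to be small on a slightly smaller concentric disc. An interpolating Blaschke product already has at least $k$ zeros in $\Delta(0,\rho_k)$ for suitable $\rho_k\to1$, yet $\sup_{\Delta(0,m)}\abs{B}$ is bounded below for every $m$. Your own estimate exposes the problem: $n$ zeros in $\Delta(\zeta,\rho)$ give $\sup_{\Delta(\zeta,m)}\abs{B}\le\big(\tfrac{\rho+m}{1+\rho m}\big)^{n}\approx\exp\big(-c\,n(1-\rho)(1-m)\big)$, so you need $n_k(1-\rho_k)(1-m_k)\to\infty$ simultaneously with $m_k\to1$, and you never produce discs achieving this; for a separated sequence $n\lesssim(1-\rho)^{-1}$, so the exponent stays bounded --- which is exactly why the lemma is false for interpolating products. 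The paper resolves this tension concretely: taking $\zeta_j$ to be the top-center point of $Q_j$, it bounds $\log(1/\abs{B(z)})\ge\tfrac12\sum_{z_k\in Q_j}(1-\abs{z}^2)(1-\abs{z_k}^2)/\abs{1-\overline{z}_kz}^2\gtrsim(1-m_j)S_j$ on $\Delta(\zeta_j,m_j)$ and then \emph{chooses} $1-m_j=S_j^{-1/2}$, so the lower bound $S_j^{1/2}$ still tends to infinity while $m_j\to1$. Without the Carleson-ratio input and some such balancing choice of $m_j$, the two requirements in \eqref{eq:suplimit} cannot be met, so the proposal as written does not yield the lemma.
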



\begin{proof}
Since $B$ is not a finite product of interpolating Blaschke products, we know that
$\mu = \sum_{k\in\N} (1-\abs{z_k})\, \delta_{z_k}$
is not a Carleson measure \cite[Lemma~21]{MS1979}. Here $\delta_{z_k}$ is the Dirac measure with unit point mass
at $z_k\in\D$, and $\set{z_k}_{k\in\N}$ is the zero-sequence of $B$. Consequently, there exists a
sequence $\set{Q_j}_{j\in\N}$ of Carleson boxes $Q_j=Q_j(B)$ of the form
\begin{equation*}
  Q_j  = \big\{z\in\D : \text{$z/\abs{z}\in I_{Q_j}$ and $1-\abs{z} \leq \abs{I_{Q_j}}$}\big\}, \quad j\in\N,
\end{equation*}
where $I_{Q_j}$ is an arc on $\partial\D$ of length $\abs{I_{Q_j}} = l(Q_j)$, 
such that the corresponding sequence $\set{S_j}_{j\in\N}$, where $S_j=S_j(B)\in (0,\infty)$ is defined by
\begin{equation*}
  S_j = \frac{1}{l(Q_j)} \, \sum_{z_k\in Q_j} (1-\abs{z_k}) = \frac{\mu(Q_j)}{l(Q_j)},
\end{equation*}
satisfies $S_j\to\infty$, as $j\to\infty$.
Without loss of generality, we may suppose that $Q_j \cap \set{z_k}_{k\in\N} \neq \emptyset$,
and $l(Q_j)\leq 1/2$, for every $j\in\N$.

Let $z(Q_j)\in\D$ be the point such that $\abs{z(Q_j)} = 1-l(Q_j)$, and whose argument is the center of $I_{Q_j}$.
Define $m_j$ by $1-m_j = S_j^{-1/2}$ for $j\in\N$. Since $S_j\to\infty$, we conclude that
$m_j\to 1^-$, as $j\to\infty$. If $z\in\Delta\big(z(Q_j),m_j\big)$, then
$1-\abs{z} > (1-m_j)\, l(Q_j)/8$ by \cite[p.~42]{DS2004}, which implies that
\begin{align*}
  \log\frac{1}{\abs{B(z)}} 
  & \geq \frac{1}{2} \, \sum_{z_k \in Q_j} \frac{(1-\abs{z}^2)(1-\abs{z_k}^2)}{\abs{1-\overline{z}_k z}^2}
  \geq \frac{1-\abs{z}^2}{18\cdot l(Q_j)^2} \, \sum_{z_k \in Q_j} (1-\abs{z_k}^2)\\
  & \geq \frac{1-\abs{z}^2}{18\cdot l(Q_j)} \cdot \frac{\mu(Q_j)}{l(Q_j)}
  \geq \frac{1-m_j}{144} \cdot S_j = \frac{S_j^{1/2}}{144} \to \infty, \quad j\to\infty,
\end{align*}
for all $z\in\Delta\big(z(Q_j),m_j\big)$.
Here we have applied the inequality $\log 1/x \geq (1-x^2)/2$ for $0<x<1$.
The assertion follows by choosing $\set{\zeta_k}_{k\in\N} = \set{z(Q_j)}_{j\in\N}$,
and $\set{m_k}_{k\in\N} = \set{m_j}_{j\in\N}$.
\end{proof}


\begin{proof}[Proof of Proposition~\ref{prop:B_finprod}]
Suppose on the contrary to the claim that $B$ is a Blaschke product, which
is not a finite product of interpolation Blaschke products.
By Lemma~\ref{lemma:needed} there exist $\set{\zeta_k}_{k\in\N}\subset \D$, 
and $\set{m_k}_{k\in\N}\subset (0,1)$, such that  \eqref{eq:suplimit} holds.

Let $1<M<\infty$ be a large fixed number, whose exact value is to be determined later.
Let $I_k\subset\partial\D$ for $k\in\N$ be the arc, 
which is centered at $\zeta_k/\abs{\zeta_k}$, and whose
length is $\abs{I_k} = 1-\abs{\zeta_k}$. After removing finitely many points from $\set{\zeta_k}_{k\in\N}$
we may assume that $\abs{I_k}\leq 1/2$ for all $k\in\N$.
Correspondingly, $M I_k$ denotes the concentric arc of length $M \abs{I_k} = M(1-\abs{\zeta_k})$. Define
\begin{equation*}
  E_k = \big\{ re^{i\theta} \in\D : \text{$e^{i\theta}\in M I_k$, and $1-r\leq \abs{I_k}$} \big\}, \quad k\in\N.
\end{equation*}

On one hand, \eqref{eq:suplimit} shows that $\abs{B(z)}\leq 1/2$ holds for all $z\in\D$ such
that $\abs{z}=1-\abs{I_k}$, and $z/\abs{z} \in M I_k$, provided that $k$ is sufficiently large.
Thus
\begin{equation*}
\frac{1}{2} \leq \big| B(e^{i\theta}) - B\big( (1-\abs{I_k}) e^{i\theta} \big) \big|
      \leq \int_{1-\abs{I_k}}^1 \abs{B'(re^{i\theta})} \, dr, \quad \text{a.e. $e^{i\theta} \in M I_k$},
\end{equation*}
and consequently $\int_{E_k} \abs{B'(z)} \, dA(z) \geq M \abs{I_k}/4$ for all $k\in\N$
large enough.

On the other hand, if $\max\set{1,p-1}<r<p$, then the Kolmogorov condition
stated in Section~\ref{sec:notation}
shows that there exists a constant $C_1=C_1(p,r,B)$ with $0<C_1<\infty$ such that
\begin{equation*}
  \left( \int_{E_k} \abs{B'(z)}^r (1-\abs{z})^{p-2} \, dA(z) \right)^{\frac{1}{r}} \leq C_1 \left( \int_{E_k} (1-\abs{z})^{p-2} \, dA(z) \right)^{\frac{1}{r}-\frac{1}{p}},
  \quad k\in\N.
\end{equation*}
Now H\"older's inequality with indices $r$ and $r/(r-1)$ implies that there exists a~constant
$C_2=C_2(p,r)$ with $0<C_2<\infty$ such that
\begin{align*}
  \int_{E_k} \abs{B'(z)} \, dA(z)
  & \leq \left( \int_{E_k} \abs{B'(z)}^r (1-\abs{z})^{p-2} \, dA(z) \right)^{\frac{1}{r}}
  \left( \int_{E_k} (1-\abs{z})^\frac{2-p}{r-1} \, dA(z) \right)^{\frac{r-1}{r}}\\
  & \leq C_1 \left( \int_{E_k} (1-\abs{z})^{p-2} \, dA(z) \right)^{\frac{1}{r}-\frac{1}{p}}
  \left( \int_{E_k} (1-\abs{z})^\frac{2-p}{r-1} \, dA(z) \right)^{\frac{r-1}{r}}\\
  & \leq C_1 \,  C_2\, M^{1-\frac{1}{p}} \, \abs{I_k}, \quad k\in\N.
\end{align*}

We conclude that, if $1<M<\infty$ is sufficiently large, then the obtained upper and lower bounds for
$\int_{E_k} \abs{B'(z)} \, dA(z)$ lead to a contradiction. The assertion follows.
\end{proof}

Finally we are in a position to prove our main result.


\begin{proof}[Proof of Theorem~\ref{thm:weakBergman}]
Since derivatives of exponential Blaschke products are in $H^1_w$ by \cite[Theorem~1]{CNpreprint},
Proposition~\ref{prop:inclusion}(i) implies that it suffices to show that,
if $B$ is inner and $B'\in \mathcal{L}^p_w$ for some $1<p<\infty$, 
then $B$ is an exponential Blaschke product.

By Propositions~\ref{prop:B_Blaschke} and~\ref{prop:B_finprod}, we may assume that 
$B$ is a finite product of interpolating Blaschke products.  Observe that, if $\set{z_k}_{k\in\N}$ is the zero-sequence of $B$,
then it is possible that \eqref{eq:unisep} fails to be true for any $0<\delta<1$ due to zeros with multiplicities. 
However, according to \cite[Lemma~1]{GN1996} there exists a sequence $\set{\zeta_k}_{k\in\N}$, 
where $\zeta_k=\zeta_k(B)\in\D$ for $k\in\N$, as well as constants $\rho=\rho(B)$ and $\delta=\delta(B)$ satisfying
$0<\rho<1$ and $0<\delta\leq 1$ such that
\begin{equation} \label{eq:point}
  \inf_{k\in\N} \, (1-\abs{\zeta_k}^2) \abs{B'(\zeta_k)} \geq \delta,
\end{equation}
while $\zeta_k \in \Delta(z_k,\rho)$ for all $k\in\N$. By the Schwarz lemma, inequality \eqref{eq:point} can be
extended to sufficiently small pseudo-hyperbolic neighborhoods of the points $\zeta_k$ for $k\in\N$. 
In particular, there exist constants $\eta=\eta(B)$ and $\delta^\star=\delta^\star(B)$ satisfying $0<\eta<1$ and $0<\delta^\star \leq 1$,
such that
\begin{equation} \label{eq:newe}
  (1-\abs{z}^2) \abs{B'(z)} \geq \delta^\star, \quad z\in \Delta(\zeta_k,\eta), \quad k\in\N.
\end{equation}

Since $B$ is a finite product of interpolating Blaschke products, we can write
$B=B_1 B_2 \dotsb B_N$, where the zero-sequence $\set{z_{n,k}}_{k\in\N}$ of each sub-product $B_n$ for $n=1,\dotsc,N$
satisfies \eqref{eq:unisep} for some strictly positive constant $\delta=\delta_n$. By taking smaller $\eta$ if necessary,
we may assume that $\eta$ in \eqref{eq:newe} is sufficiently small to satisfy
$\eta < \min\set{\delta_1,\delta_2,\dotsc,\delta_N}/2$ and $\eta<1/3$. Consequently, the pseudo-hyperbolic discs
$\set{\Delta(z_{n,k},\eta)}_{k\in\N}$ are pairwise disjoint for any $n=1,\dotsc,N$. Although
the discs $\set{\Delta(z_k,\eta)}_{k\in\N}$ are not necessarily pairwise disjoint, they are quasi-disjoint
in the sense that the characteristic functions $\chi_{\Delta(z_k,\eta)}$ of the discs $\Delta(z_k,\eta)$ for $k\in\N$ satisfy
\begin{equation*}
  \sum_{k\in\N} \chi_{\Delta(z_k,\eta)}(z) \leq N, \quad z\in\D.
\end{equation*}
This implies that the sequence $\set{\zeta_k}_{k\in\N}$ in \eqref{eq:point} satisfies
\begin{equation} \label{eq:quasidis}
  \sum_{k\in\N} \chi_{\Delta(\zeta_k,\eta)}(z) \leq M, \quad z\in\D,
\end{equation}
for some constant $M=M(B)$ with $0<M<\infty$, and hence the discs $\set{\Delta(\zeta_k,\eta)}_{k\in\N}$ are quasi-disjoint.

Let $\mathcal{A}_j$ for $j\in\N$ be the annuli in \eqref{eq:annuli}, and observe 
that their pseudo-hyperbolic widths are always greater than $1/3$.
Take $\lambda$ such that $\delta^\star\leq \lambda<\infty$,
and denote $J(\lambda) = \log_2 \lambda/\delta^\star +3$.
If $k\in\N$ such that $\zeta_k\in\mathcal{A}_j$ for some $j> J(\lambda)$, then
$\Delta(\zeta_k,\eta) \subset \big( \mathcal{A}_{j-1} \cup \mathcal{A}_j \cup \mathcal{A}_{j+1} \big)$,
and by \eqref{eq:newe}
\begin{equation*}
  \abs{B'(z)} > \frac{\delta^\star/2}{1-\abs{z}} \geq \delta^\star 2^{j-3} > \lambda, \quad z\in\Delta(\zeta_k,\eta).
\end{equation*}

Define $\mathcal{N}_j = \# \big( \mathcal{A}_j \cap \set{\zeta_k}_{k\in\N} \big)$ 
for $j\in\N$. Since $B'\in \mathcal{L}^p_w$, and the discs $\set{\Delta(\zeta_k,\eta)}_{k\in\N}$ 
satisfy \eqref{eq:quasidis}, we obtain
\begin{align*}
  \frac{\bnm{B'}^p_{L^p_w(\D,\mu_p)}}{\lambda^p} 
  & \geq \mu_p\big( \big\{ z\in\D : \abs{B'(z)} > \lambda\big\} \big)\\
  & \geq  \frac{1}{3} \, \sum_{j>J(\lambda)} \,  \mu_p\big( \big\{ z\in \big( \mathcal{A}_{j-1} \cup \mathcal{A}_j \cup \mathcal{A}_{j+1} \big) 
      :  \abs{B'(z)} > \lambda \big\} \big) \\
  & \geq  \frac{1}{3} \, \sum_{j>J(\lambda)} \,  \mu_p \Bigg( \bigcup_{\zeta_k\in\mathcal{A}_j} \Delta(\zeta_k,\eta) \Bigg)
  \geq \frac{1}{3M} \, \sum_{j>J(\lambda)}  \sum_{\zeta_k\in\mathcal{A}_j} \,\mu_p \big( \Delta(\zeta_k,\eta) \big)\\
  & =  \frac{1}{3M} \, \sum_{j>J(\lambda)}  \sum_{\zeta_k\in\mathcal{A}_j} \,\int_{\Delta(\zeta_k,\eta)} (1-\abs{z})^{p-2}\, dA(z).
\end{align*}
We deduce
\begin{equation*}
  \frac{\bnm{B'}^p_{L^p_w(\D,\mu_p)}}{\lambda^p} \geq  \frac{C}{M} \, \sum_{j>J(\lambda)}  \mathcal{N}_j \, 2^{-jp},
\end{equation*}
where $C=C(\eta,p)$ is a constant such that $0<C<\infty$.
Consequently, each annulus $\mathcal{A}_j$ for $j\in\N$ contains at most a fixed number of points 
from the sequence $\set{\zeta_k}_{k\in\N}$.
This means that $B$ is an exponential Blaschke product, since 
$\zeta_k\in\Delta(z_k,\rho)$ for all $k\in\N$.
This closes the proof of Theorem~\ref{thm:weakBergman}.
\end{proof}


\section{Proof of Theorem~\ref{thm:p1}}

It is sufficient to prove that, if $B$ is inner and $B'\in\tilde{\mathcal{L}}^1_w$,
then $B$ is a finite Blaschke product. To this end,
suppose that $B$ is an inner function such that $B'\in \tilde{\mathcal{L}}^1_w$, and assume on the
contrary to the claim that $B$ has infinitely many zeros. 
According to Proposition~\ref{prop:inclusion}(i)  we know that $B'\in H^1_w$, and
hence $B$ is an exponential Blaschke product \cite[Theorem~1]{CNpreprint}. Consequently, $B'$ has radial
limits almost everywhere. If $\set{z_k}_{k\in\N}$ is the zero-sequence of $B$, then
\begin{equation*}
  \abs{B'(\xi)} = \sum_{k\in\N} \frac{1-\abs{z_k}^2}{\abs{\xi-z_k}^2},
  \quad \text{a.e. $\xi\in\partial\D$},
\end{equation*}
by \cite[Corollary~3]{AC1974}. Let $I_k$ for $k\in\N$ be 
the arc on $\partial\D$, which is centered at $z_k/\abs{z_k}$, and
whose length is $2\pi(1-\abs{z_k})$. If $z_k\in\D$ is a zero of $B$, then
\begin{equation*}
  \abs{B'(\xi)} \geq \frac{1-\abs{z_k}^2}{\abs{\xi-z_k}^2} > \frac{1}{(1+\pi)^2} \, \frac{1}{1-\abs{z_k}},
\quad \text{a.e. $\xi\in I_k$}.
\end{equation*}
By Egorov's theorem \cite[p.~73]{R1987}, there exists a sequence $\set{E_k}_{k\in\N}$, where $E_k$ are subsets of
$I_k$ satisfying $\abs{E_k} > \abs{I_k}/2$ for all $k\in\N$, 
and a sequence $\set{r_k}_{k\in\N}\subset (0,1)$, such that
\begin{equation*}
  \abs{B'(r \xi)} > \frac{1}{(1+\pi)^2} \, \frac{1}{1-\abs{z_k}},
  \quad r_k<r<1, \quad \xi\in E_k.
\end{equation*}

By extracting a subsequence of $\set{z_k}_{k\in\N}$,  and by choosing the sets $E_k$ accordingly, 
we may assume that the sets $H_k=\big\{r\xi\in\D : r_k < r <1, \, \, \xi\in E_k \big\}$ for $k\in\N$ 
are pairwise disjoint, while $\abs{E_k}>\abs{I_k}/4$. Fix $0<\lambda<\infty$, and define
\begin{equation*}
  K(\lambda) = \left\{ k\in\N : \lambda > \frac{1}{(1+\pi)^2(1-r_k)(1-\abs{z_k})} \right\}.
\end{equation*}
Observe that $K(\lambda)$ is a bounded subset of $\N$, while
$\#K(\lambda)\to\infty$, as $\lambda\to\infty$. Now
\begin{equation*}
  \left\{ z\in\D : \frac{\abs{B'(z)}}{1-\abs{z}}> \lambda \right\} \supset
  \bigcup_{k\in K(\lambda)} \left\{ z\in H_k : \frac{1}{(1+\pi)^2 (1-\abs{z_k})(1-\abs{z})} > \lambda \right\},
\end{equation*}
which implies that
\begin{align*}
  & \Area \left\{ z\in\D : \frac{\abs{B'(z)}}{1-\abs{z}}> \lambda \right\}\\
  & \qquad \geq \sum_{k\in K(\lambda)} \Area \left\{ z\in H_k : \frac{1}{(1+\pi)^2 (1-\abs{z_k})(1-\abs{z})} > \lambda \right\}
  \geq \frac{\#K(\lambda) \cdot \pi}{2(1+\pi)^2\lambda}.
\end{align*}
Consequently $B' \notin \tilde{\mathcal{L}}^1_w$, which closes the proof.


\section*{Acknowledgements}

The authors thank K. Dyakonov for helpful conversations.
The first author is also grateful to the Universitat Aut\`onoma de Barcelona 
for hospitality during the academic year 2012-2013.


\end{document}